\newtheorem{theorem}{Theorem}[section]
\newtheorem{lemma}[theorem]{Lemma}
\theoremstyle{remark}
\newtheorem{remark}{Remark}[section]
\newtheorem{remarks}[remark]{Remarks}
\numberwithin{equation}{section}
\newcommand{\ZP}{\mathbb{Z}_+}
\newcommand{\R}{\mathbb{R}}
\newcommand{\N}{\mathbb{N}}
\newcommand{\ud}{\textup{d}}
\newcommand{\rc}{{\mathrm{c}}}
\newcommand{\eps}{\varepsilon}
\newcommand{\ubar}[1]{\underline{#1\mkern-4mu}\mkern4mu }
\renewcommand{\Pr}{\mathbb{P}}
\newcommand{\Exp}{\mathbb{E}}
\newcommand{\Var}{\mathbb{V} {\rm ar}}
\newcommand{\hull}{\mathop \mathrm{hull}}
\newcommand{\as}{{\ \mathrm{a.s.}}}
\newcommand{\argmin}{\mathop{ \mathrm{arg} \min}}
\newcommand{\argmax}{\mathop{ \mathrm{arg} \max}}
\newcommand{\HH}{{\mathcal{H}}}
\newcommand{\FF}{{\mathcal{F}}}
\newcommand{\1}{{\bf 1}}
\newcommand{\be}{{\bf e}}
\newcommand{\0}{{\bf 0}}
\begin{document}

\title{Convex hulls of planar random walks with drift}
\author{Andrew R.\ Wade\footnote{Department of Mathematical Sciences,
Durham University, South Road, Durham DH1 3LE.}
\and Chang Xu\footnote{Department of Mathematics and Statistics,
University of Strathclyde, 26 Richmond Street, Glasgow G1 1XH.}}
\date{\today}
\maketitle

\begin{abstract}
Denote by $L_n$ the length of the perimeter of the convex hull of $n$ steps of a planar random walk whose
 increments have finite second moment and non-zero mean.
 Snyder and Steele
 showed that $n^{-1} L_n$ converges almost surely to a deterministic limit, and proved an upper bound on the variance $\Var [ L_n] = O(n)$.
 We show that $n^{-1} \Var [L_n]$ converges and give a simple expression for the limit,
which is non-zero for walks outside a certain degenerate class.
This answers a question of Snyder and Steele.
 Furthermore, we prove  a central limit theorem for $L_n$ in the non-degenerate case.
  \end{abstract}

\medskip

\noindent
{\em Key words:}  Convex hull, random walk, variance asymptotics, central limit theorem.

\medskip

\noindent
{\em AMS Subject Classification:} 60G50, 60D05 (Primary)  60J10, 60F05 (Secondary)

\section{Introduction and  main results}
\label{sec:intro}

On each of $n$ unsteady steps, a drunken gardener drops a seed. Once the flowers have bloomed, what is the minimum length of fencing required to enclose the garden?

Let $Z_1, Z_2, \ldots$ be a sequence of independent, identically distributed (i.i.d.)
random vectors on $\R^2$. Write $\0$ for the origin in $\R^2$.
Define the random walk $(S_n; n \in \ZP)$ by $S_0 := \0$
and for $n \geq 1$, $S_n := \sum_{i=1}^n Z_i$.
Let
$\HH_n := \hull ( S_0, \ldots, S_n )$, the convex hull of positions of the walk up to and including the $n$th step,
and let $L_n := | \partial \HH_n |$ denote the length of the perimeter of $\HH_n$.
Assume that the increments of the random walk
have finite mean: $\Exp  \| Z_1 \|   < \infty$.

\begin{figure}[h!]
  \centering
    \includegraphics[width=0.9\textwidth]{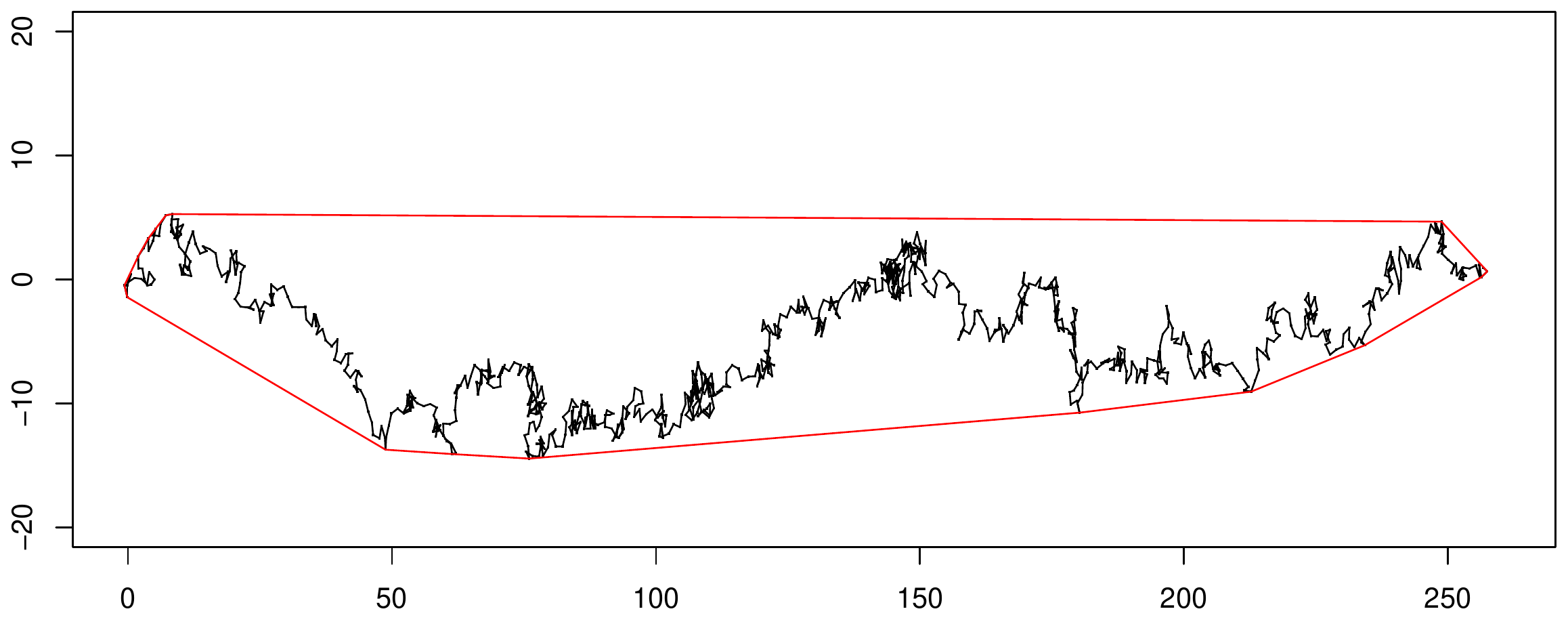}
  \caption{Example with mean drift $\Exp [ Z_1]$ of magnitude $\mu = 1/4$ and $n = 10^3$ steps.}\label{fig1}
\end{figure}

Convex hulls of random points have received much attention over the last several decades: see \cite{mcr} for an extensive survey,
 including more than 150 bibliographic references, and sources
of motivation more serious than our drunken gardener, such as modelling the `home-range' of animal populations.
An important tool in the study of random convex hulls
is provided by  a result  of Cauchy in classical convex geometry.
Spitzer and Widom \cite{sw}, using Cauchy's formula, and later Baxter \cite{baxter}, using a combinatorial argument,
showed that
\[ \Exp [ L_n ] = 2 \sum_{i=1}^n \frac{1}{i} \Exp \| S_i \| .\]
Note that  $\Exp [ L_n]$ thus scales like $n$ in the case where the one-step mean drift vector
$\Exp [ Z_1 ] \neq \0$ but like $n^{1/2}$
in the  case where $\Exp [ Z_1 ] = \0$.
The Spitzer--Widdom--Baxter result, in common with much of the literature,
is concerned with first-order properties of $L_n$:
see \cite{mcr} for a summary
of results in this direction for various random convex hulls, with a specific focus on (driftless) planar Brownian motion.

Much less is known about higher-order properties of $L_n$.
Assuming that $\Exp [ \| Z_1 \|^2 ] < \infty$,
Snyder and Steele \cite{ss} obtained an upper bound for $\Var [ L_n]$
 using Cauchy's formula together with a version of the Efron--Stein inequality.
Snyder and Steele's result  (Theorem 2.3 of  \cite{ss}) can be expressed as
\begin{equation}
\label{ssup}   n^{-1} \Var [L_n] \leq \frac{\pi^2}{2}
\left( \Exp [ \| Z_1 \|^2 ] - \| \Exp [ Z_1 ] \|^2 \right)
,  ~~~ (n \in \N := \{1,2,\ldots \} ) . \end{equation}

As far as we are aware, there are no  lower bounds  for $\Var [ L_n]$ in the literature.
According to the discussion in \cite[\S 5]{ss}, Snyder and Steele had ``no compelling reason to expect that $O(n)$ is the correct order of magnitude'' in their upper bound for $\Var [L_n]$,
and they speculated that perhaps $\Var [ L_n] = o(n)$ (maybe with a distinction between the cases of zero and non-zero drift).
Our first main result settles this question under minimal conditions, confirming
that (\ref{ssup}) is indeed of the correct order, apart from
in certain degenerate cases, while demonstrating that the constant on the right-hand side of (\ref{ssup}) is not, in general, sharp.

\begin{theorem}
\label{thm1}
Suppose that $\Exp [  \| Z_1 \|^2]   < \infty$ and
$\| \Exp [ Z_1 ] \|  \neq 0$. Then
\begin{equation}
\label{vlim}
\lim_{n \to \infty} n^{-1} \Var [ L_n ] = \frac{4 \Exp [ ( (Z_1 - \Exp [Z_1] ) \cdot \Exp [ Z_1] )^2 ]}{\| \Exp [ Z_1] \|^2} =: \sigma^2 \in [0,\infty).\end{equation}
\end{theorem}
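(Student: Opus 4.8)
\medskip\noindent\emph{Approach.} The natural tool is Cauchy's formula: $L_n = \int_0^{2\pi} M_n(\theta)\,\ud\theta$, where $M_n(\theta) := \max_{0\le k\le n} S_k\cdot\be_\theta$ is the support function of $\HH_n$ in direction $\be_\theta := (\cos\theta,\sin\theta)$. Set $\mu := \Exp[Z_1]$ and $\hat\mu := \mu/\|\mu\|$; after a rotation we may assume $\mu = \|\mu\|\,\be_1$. For $\theta$ with $\be_\theta\cdot\mu>0$, i.e. $\theta\in(-\pi/2,\pi/2)$, the walk has positive drift in direction $\be_\theta$, so the maximiser is attained near time $n$ and $M_n(\theta)$ is close to $S_n\cdot\be_\theta$; for $\theta\in(\pi/2,3\pi/2)$ it has negative drift in direction $\be_\theta$ and $M_n(\theta)$ is close to $S_0\cdot\be_\theta=0$. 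Since $\int_{-\pi/2}^{\pi/2}\be_\theta\,\ud\theta = 2\be_1 = 2\hat\mu$, this makes it plausible that $L_n\approx 2\,S_n\cdot\hat\mu$, and the whole proof is about controlling the fluctuation around this.

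\medskip\noindent\emph{Exact decomposition.} Writing $M_n(\theta) = S_n\cdot\be_\theta + \widetilde M_n(\theta)$ for $\theta\in(-\pi/2,\pi/2)$, with $\widetilde M_n(\theta) := \max_{0\le k\le n}(S_k-S_n)\cdot\be_\theta\ge 0$ (as $k=n$ is admissible), and using $\int_{-\pi/2}^{\pi/2}\be_\theta\,\ud\theta = 2\hat\mu$, one obtains the identity
\[
L_n = 2\,S_n\cdot\hat\mu + E_n,\qquad E_n := \int_{-\pi/2}^{\pi/2}\widetilde M_n(\theta)\,\ud\theta + \int_{\pi/2}^{3\pi/2} M_n(\theta)\,\ud\theta\ \ge\ 0 .
\]
Here $\Var[S_n\cdot\hat\mu] = n\Var[Z_1\cdot\hat\mu]$ and $4\Var[Z_1\cdot\hat\mu] = 4\Exp[((Z_1-\mu)\cdot\hat\mu)^2] = 4\Exp[((Z_1-\mu)\cdot\mu)^2]/\|\mu\|^2 = \sigma^2$. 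Expanding $\Var[L_n] = 4\Var[S_n\cdot\hat\mu] + 4\,\mathrm{Cov}(S_n\cdot\hat\mu,E_n) + \Var[E_n]$ and bounding the middle term by $2\sqrt{\Var[S_n\cdot\hat\mu]\,\Var[E_n]}$, the theorem reduces to the single estimate $\Var[E_n]=o(n)$.

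\medskip\noindent\emph{The remainder.} Each $\widetilde M_n(\theta)$, and each $M_n(\theta)$ for $\theta$ in the backward half-circle, is distributed as the maximum over the first $n$ steps of a random walk with one-step mean $\mp\|\mu\|\cos\theta$ and increment second moment at most $\Exp[\|Z_1\|^2]$, so by Doob's $L^2$-inequality $\|\widetilde M_n(\theta)\|_2 = O(\sqrt n)$ for every $\theta$. Away from $\theta=\pm\pi/2$ the relevant drift is bounded away from $0$ and one would like an $O(1)$ bound there, but under a bare second moment hypothesis the \emph{all-time} maximum of a negative-drift walk may have infinite variance. I would get around this by truncation: take $b_n=\eps_n\sqrt n$ with $\eps_n\downarrow 0$ slowly enough that $n\,\Pr(\|Z_1\|>b_n)\to 0$, let $L_n^\sharp$ be the perimeter of the walk with increments $Z_i\mathbf 1\{\|Z_i\|\le b_n\}$, and let $E_n^\sharp$ be its remainder. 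The support function is $1$-Lipschitz in the vertices, so $|L_n-L_n^\sharp|\le 2\pi\sum_{i\le n}\|Z_i\|\mathbf 1\{\|Z_i\|>b_n\}$, which has $L^2$-norm $O\big(\sqrt{n\,\Exp[\|Z_1\|^2\mathbf 1\{\|Z_1\|>b_n\}]}\big)=o(\sqrt n)$; together with \eqref{ssup} for the truncated walk this gives $\Var[L_n]=\Var[L_n^\sharp]+o(n)$, so one may work with the truncated walk. There the increments are bounded by $b_n$ and have $O(1)$ variance, so the all-time maximum of the negative-drift walk in direction $\be_\theta$ is finite with $L^2$-norm $O\big(\tfrac1\delta+\sqrt{\tfrac{b_n}\delta}\big)$, $\delta:=\|\mu\|\,|\cos\theta|$; combining with the universal $O(\sqrt n)$ bound and applying Minkowski's inequality,
\[
\big\|E_n^\sharp\big\|_2 \le C\int_0^{2\pi}\min\!\Big(\tfrac1{|\cos\theta|}+\sqrt{\tfrac{b_n}{|\cos\theta|}},\ \sqrt n\,\Big)\ud\theta = O(\log n)+O\big(b_n^{1/2}\big)=o(\sqrt n),
\]
the integral being dominated by the boundary layers near $\theta=\pm\pi/2$. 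Hence $\Var[E_n^\sharp]=o(n)$, and since $\sigma^2$ for the truncated walk tends to $\sigma^2$ by dominated convergence, the theorem follows.

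\medskip\noindent\emph{Where the difficulty lies.} Identifying $E_n$ as the deviation of $L_n$ from $2\,S_n\cdot\hat\mu$ is cheap; the real work is the last step, and specifically the interaction between the bare second moment assumption (which forces the truncation) and the directions nearly perpendicular to $\hat\mu$, in which the effective drift degenerates — the same boundary layer is responsible for the $\Theta(\log n)$ term in $\Exp[L_n]$. One further point needs care: although $\widetilde M_n(\theta)$ equals \emph{in distribution} a maximum over a growing time window (hence is nicely monotone), it is not literally monotone in $n$ on the sample space, so the comparison with a fixed reversed walk must be carried out for the joint law of $(\widetilde M_n(\theta))_\theta$ rather than pathwise.
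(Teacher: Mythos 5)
Your proposal is a genuinely different route from the paper's. The paper's proof rests on an Efron--Stein--type martingale difference decomposition $L_n-\Exp[L_n]=\sum_{i=1}^n D_{n,i}$ obtained by independently resampling $Z_i$, together with an approximation $D_{n,i}\approx Y_i:=2(Z_i-\Exp Z_1)\cdot\Exp Z_1/\|\Exp Z_1\|$; the latter is proved from Cauchy's formula by showing, via the SLLN, that with high probability the argmax/argmin indices of $S_j\cdot\be_\theta$ both lie outside $[\gamma n,(1-\gamma)n]$ for all $\theta$ bounded away from the degenerate directions (Lemma~\ref{En}), so that resampling a middle increment shifts $M_n(\theta)-m_n(\theta)$ by exactly $(Z_i'-Z_i)\cdot\be_\theta$. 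That gives Theorem~\ref{thm0}, from which Theorems~\ref{thm1} and~\ref{thm2} drop out. Your approach instead decomposes $L_n=2\,S_n\cdot\hat\mu+E_n$ directly from Cauchy's formula and seeks $\Var[E_n]=o(n)$ via second-moment bounds on running maxima of negative-drift one-dimensional projections. Both roads lead to the same place, namely that $L_n-\Exp L_n$ is $o_{L^2}(\sqrt n)$-close to $\sum_i Y_i$, but the paper's resampling trick avoids quantitative moment estimates for all-time maxima entirely (it only needs SLLN-level control of argmax locations), whereas your route genuinely needs a Kingman/Lindley-type bound $\|\sup_k T_k\|_2\lesssim \delta^{-1}+\sqrt{b/\delta}$ and consequently a truncation to get bounded increments under a bare second-moment hypothesis. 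That is a real structural difference, and your route also exposes where the $\Theta(\log n)$ correction in $\Exp[L_n]$ comes from, which the paper's route does not.

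Two points in your write-up need repair. First, the claim $\bigl\|\sum_{i\le n}\|Z_i\|\1\{\|Z_i\|>b_n\}\bigr\|_2=O\bigl(\sqrt{n\,\Exp[\|Z_1\|^2\1\{\|Z_1\|>b_n\}]}\bigr)$ is incorrect as stated: that bound is what you would get for the \emph{centred} sum, but the sum has a nonzero deterministic mean $n\,\Exp[\|Z_1\|\1\{\|Z_1\|>b_n\}]$, and the bound $|L_n-L_n^\sharp|\le 2\pi\sum_i\|Z_i\|\1\{\|Z_i\|>b_n\}$ does not allow you to drop that mean when estimating $\Var[L_n-L_n^\sharp]$. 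With $b_n=\eps_n\sqrt n$ one has $n\,\Exp[\|Z_1\|\1\{\|Z_1\|>b_n\}]\le \sqrt n\,\Exp[\|Z_1\|^2\1\{\|Z_1\|>b_n\}]/\eps_n$, which is $o(\sqrt n)$ only if $\eps_n$ is chosen so that $\Exp[\|Z_1\|^2\1\{\|Z_1\|>\eps_n\sqrt n\}]=o(\eps_n)$; this is doable by a diagonal argument (the tail expectation tends to $0$), but it is a different and stronger constraint than the $n\,\Pr(\|Z_1\|>b_n)\to0$ you state, and it must be spelled out. Second, the waiting-time bound you invoke, $\|M_\infty\|_2\lesssim\delta^{-1}+\sqrt{b/\delta}$, does hold (one can derive $\Exp[W^2]\le \tfrac{\sigma^4}{2\delta^2}+\tfrac{2b\sigma^2}{3\delta}$ from the Lindley fixed-point identity applied to $\Exp[W^3]$), but it is not an off-the-shelf fact and should be proved or cited; without it the $O(\log n)+O(\sqrt{b_n})$ estimate for $\|E_n^\sharp\|_2$ has no support. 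With these two repairs your argument would be a correct alternative proof of Theorem~\ref{thm1}, though unlike the paper's proof it does not by itself yield Theorem~\ref{thm2} without an extra step (one would still want to show $E_n-\Exp[E_n]=o_{L^2}(\sqrt n)$, which is exactly what you get, so the extra step is small).
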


\begin{remarks}
(i) The assumptions  $\Exp [  \| Z_1 \|^2]   < \infty$ and
$\| \Exp [ Z_1 ] \|  \neq 0$ ensure   $\sigma^2 < \infty$.

(ii) To compare the limit result (\ref{vlim}) with Snyder and Steele's upper bound (\ref{ssup}), observe that
\[ \sigma^2 = 4 \left( \frac{\Exp [ (Z_1 \cdot \Exp [ Z_1] )^2 ] - \| \Exp[ Z_1 ] \|^4 }{\| \Exp [ Z_1] \|^2 } \right)
\leq 4 \left(  \Exp[ \| Z_1 \|^2 ] - \| \Exp [ Z_1 ] \|^2 \right) .\]

(iii)
The limit $\sigma^2$ is zero if and only if $(Z_1 - \Exp [ Z_1] ) \cdot \Exp [ Z_1] =0$
with probability 1, i.e., if $Z_1 - \Exp [ Z_1]$ is always orthogonal to $\Exp[Z_1]$.
In such a degenerate case,
(\ref{vlim}) says that $\Var[L_n]=o(n)$. This is the case, for example, if $Z_1$ takes values $(1,1)$ and $(1,-1)$ each with probability $1/2$. Note that
the Snyder--Steele bound (\ref{ssup}) applied in this example says
only that $\Var [L_n] \leq (\pi^2/2) n$, which is not the correct order. Here, the two-dimensional trajectory can be viewed as a space-time trajectory of a \emph{one-dimensional} simple symmetric random walk. We conjecture that in fact $\Var [L_n] = O (\log n)$. Steele \cite{steele}
obtains variance results for the \emph{number of faces} of the convex hull
of one-dimensional simple random walk, and comments that such results for $L_n$ seem ``far out of reach'' \cite[p.\ 242]{steele}.
\end{remarks}

In the case where $\Exp [  \| Z_1 \|^2]   < \infty$ and
$\| \Exp  [ Z_1 ] \| = \mu > 0$, Snyder and Steele deduce from their bound (\ref{ssup}) a strong law of large numbers for $L_n$,
namely
$\lim_{n \to \infty} n^{-1} L_n = 2 \mu$, a.s.\ (see \cite[p.\ 1168]{ss}).
Given this and the variance asymptotics of Theorem \ref{thm1},
it is natural to ask whether there is an accompanying central limit theorem.
Our next result gives a positive answer in the non-degenerate case, again with essentially minimal assumptions.

\begin{theorem}
\label{thm2}
Suppose that $\Exp [  \| Z_1 \|^2]   < \infty$ and $\| \Exp [ Z_1 ] \|  \neq 0$. Suppose that $\sigma^2$ as defined in (\ref{vlim}) satisfies $\sigma^2>0$. Then for any $x \in \R$,
\begin{equation}
\label{clt}
 \lim_{n \to \infty} \Pr \bigg[ \frac{L_n - \Exp [ L_n ] }{\sqrt{\Var [ L_n ]} } \leq x \bigg]
=
\lim_{n \to \infty} \Pr \bigg[ \frac{L_n - \Exp [ L_n ] }{\sqrt{\sigma^2 n} } \leq x \bigg]
 = \Phi (x) ,
  \end{equation}
  where $\Phi$ is the standard normal distribution function.
\end{theorem}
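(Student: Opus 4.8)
The plan is to build on the variance analysis underpinning Theorem \ref{thm1} and to establish the central limit theorem via a martingale difference decomposition combined with a careful error-control argument. Recall Cauchy's formula: for a convex body $K \subset \R^2$, $|\partial K| = \int_0^{2\pi} h_K(\theta) \, \ud \theta$, where $h_K(\theta) = \sup_{x \in K} (x \cdot (\cos\theta, \sin\theta))$ is the support function. Applied to $\HH_n$ this gives $L_n = \int_0^{2\pi} \max_{0 \le k \le n} (S_k \cdot \be_\theta)\, \ud \theta$, with $\be_\theta := (\cos\theta,\sin\theta)$. Thus $L_n$ is an integral over $\theta$ of the running maximum of a one-dimensional random walk with increments $Z_i \cdot \be_\theta$, whose mean is $\Exp[Z_1] \cdot \be_\theta$. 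Write the drift direction as $\be_0$ (WLOG, after rotating, assume $\Exp[Z_1] = (\mu, 0)$ with $\mu > 0$). The key structural fact, which I expect was already exploited for Theorem \ref{thm1}, is that for $\theta$ in a neighbourhood of $0$ the running maximum $\max_{0 \le k \le n}(S_k \cdot \be_\theta)$ is, with overwhelming probability, attained at $k = n$ (since the drift pushes the walk out in the $\be_0$ direction and the one-step variance is finite), while for $\theta$ bounded away from $0$ and $\pi$ the drift in direction $\be_\theta$ is still positive and a similar statement holds on the relevant arc; only near $\theta = \pi$ does the maximum sit near $k = 0$ and contribute negligibly to fluctuations. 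Consequently, up to an error that is $o(\sqrt n)$ in $L^2$, one has $L_n \approx \int_{\HH_\eps} S_n \cdot \be_\theta \, \ud \theta + (\text{boundary terms})$ over an arc $\HH_\eps$ around $0$, and the fluctuations of $L_n$ are governed by a \emph{linear} functional of $S_n$ plus lower-order corrections. More precisely, I expect $L_n - \Exp[L_n] = c \cdot (S_n - \Exp[S_n]) + o(\sqrt n)$ in probability fails to be quite strong enough (it would give the wrong variance), so instead one should keep the leading correction and write $L_n$ as a sum $\sum_{i=1}^n D_{n,i}$ of (approximate) martingale differences adapted to $\FF_i := \sigma(Z_1,\dots,Z_i)$.

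The main steps I would carry out are the following. \textbf{Step 1: localisation of the support-function maximum.} Show that there exist $\eps > 0$ and events of probability $1 - o(1/n)$ (or with suitably small complementary expectation) on which, for all $\theta$ with $|\theta| \le \pi/2$ (say), the maximiser $\argmax_{0 \le k \le n}(S_k\cdot\be_\theta)$ lies in the last $n - n^{1-\delta}$ steps, and symmetrically the maximiser for $|\theta - \pi| \le \pi/2$ lies in the first $n^{1-\delta}$ steps. This uses the strong law / Hoeffding-type bounds for the one-dimensional walks $S_k \cdot \be_\theta$ uniformly in $\theta$ on compact arcs of positive drift, exactly as in the proof of Theorem \ref{thm1}. \textbf{Step 2: martingale difference representation.} Using the Doob decomposition, write $L_n - \Exp[L_n] = \sum_{i=1}^n \Delta_i$, where $\Delta_i := \Exp[L_n \mid \FF_i] - \Exp[L_n \mid \FF_{i-1}]$. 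By Step 1 and the explicit formula, $\Delta_i$ is, up to an $L^2$-error summing to $o(\sqrt n)$, equal to $\int (\text{indicator that }\theta\text{'s max is past step }i)\,(Z_i \cdot \be_\theta - \Exp[Z_i\cdot\be_\theta])\,\ud\theta$, whose conditional structure one can make explicit; the dominant contribution comes from $\theta$ near $0$, on which arc the "max is past step $i$" indicator equals $1$ for all $i$ with high probability, giving $\Delta_i \approx 2(Z_i - \Exp[Z_i])\cdot\Exp[Z_1]/\mu \cdot(1 + o(1))$ — matching $\sigma^2 = 4\Exp[((Z_1 - \Exp[Z_1])\cdot\Exp[Z_1])^2]/\mu^2$. \textbf{Step 3: CLT for the martingale.} Apply the martingale central limit theorem (e.g. the Lindeberg-type version, Brown's theorem, or McLeish): verify (a) $\sum_{i=1}^n \Exp[\Delta_i^2 \mid \FF_{i-1}] / n \to \sigma^2$ in probability — this is where the variance computation from Theorem \ref{thm1} is reused, now in its conditional form; and (b) a conditional Lindeberg condition $\sum_i \Exp[\Delta_i^2 \1\{|\Delta_i| > \varepsilon\sqrt n\}\mid\FF_{i-1}]/n \to 0$, which follows from $\Exp[\|Z_1\|^2] < \infty$ since $|\Delta_i|$ is dominated by a constant times $\|Z_i\| + (\text{small tail terms})$, all in $L^2$. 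This yields $(L_n - \Exp[L_n])/\sqrt n \Rightarrow \NN(0,\sigma^2)$. The equivalence of the two normalisations in \eqref{clt} then follows from Theorem \ref{thm1} and Slutsky's lemma, using $\sigma^2 > 0$.

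The main obstacle I anticipate is \textbf{Step 2}: controlling the martingale differences $\Delta_i$ uniformly well enough that the accumulated $L^2$-error from replacing the true $\Delta_i$ by its leading linear-in-$Z_i$ approximation is genuinely $o(\sqrt n)$ rather than merely $O(\sqrt n)$. The subtlety is that the "max is past step $i$" indicator is not literally $1$: there is a small probability, roughly summable but not obviously so after squaring and summing over $i$, that the running maximum in some direction $\theta$ near $0$ is attained at an early step, and one must show these rare events contribute $o(n)$ to $\sum_i \Exp[\Delta_i^2]$ and, more delicately, do not spoil the conditional-variance convergence in Step 3(a). Handling the contribution of directions $\theta$ in the transitional regions near $\pm\pi/2$ and near $\pi$ (where the identity of the maximiser genuinely depends on the whole path) requires the quantitative localisation estimates of Step 1 with a polynomial rate, and marrying those rates to the $L^2$ bookkeeping is the technical heart of the argument. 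A secondary but real difficulty is interchanging the $\theta$-integral with conditional expectations and with the martingale CLT hypotheses; Fubini together with the uniform-in-$\theta$ bounds from Step 1 should justify this, but it must be done with care because the integrand is a supremum of the path, not a smooth function of $Z_1,\dots,Z_n$.
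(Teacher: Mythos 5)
Your proposal is correct in its main strategy and correctly identifies the essential ingredients: Cauchy's formula, the localization of the $\theta$-directional extrema away from the middle of the time window (your Step 1, the paper's Lemma \ref{En}), and the Doob martingale difference decomposition $L_n - \Exp[L_n] = \sum_i D_{n,i}$ with $D_{n,i} := \Exp[L_n\mid\FF_i] - \Exp[L_n\mid\FF_{i-1}]$ (your Step 2, the paper's Lemma \ref{lem1}). You also correctly locate the technical heart: showing the accumulated $L^2$-error from replacing $D_{n,i}$ by its leading term is $o(n)$ after squaring and summing, which is precisely what the paper's Lemma \ref{wni} accomplishes.

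The genuine difference is in Step 3. You propose to feed the $D_{n,i}$ into a martingale CLT (Brown/McLeish), verifying conditional-variance convergence and a conditional Lindeberg condition. The paper instead exploits a stronger structural fact that you come close to but do not fully cash out: the leading approximation to $D_{n,i}$ is $Y_i := 2(Z_i - \Exp[Z_1])\cdot\Exp[Z_1]/\|\Exp[Z_1]\|$, which depends only on $Z_i$ and hence forms an \emph{i.i.d.} sequence. The paper's Theorem \ref{thm0} shows $n^{-1/2}\bigl|L_n - \Exp[L_n] - \sum_{i=1}^n Y_i\bigr| \to 0$ in $L^2$, after which the CLT is immediate from the classical i.i.d.\ CLT applied to $\sum Y_i$, plus Slutsky. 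This is cleaner than your route: it avoids verifying the conditional Lindeberg condition and the in-probability convergence of $n^{-1}\sum_i \Exp[D_{n,i}^2\mid\FF_{i-1}]$ to $\sigma^2$, both of which would require additional, if routine, bookkeeping on top of the $L^2$-approximation you would need anyway. Your route works — once you have the $L^2$-approximation the martingale CLT hypotheses can be checked — but it does more work than necessary; the observation that the approximant is literally an i.i.d.\ sum short-circuits the martingale machinery entirely. Both approaches give the same conclusion and share the same hard lemma; you should prefer the paper's endgame.
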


 Our Theorems \ref{thm1} and \ref{thm2} will be deduced as consequences of the following result, which
 shows, perhaps surprisingly, that $L_n - \Exp [ L_n]$ can be well-approximated by a sum of i.i.d.\ random variables.

 \begin{theorem}
 \label{thm0}
 Suppose that $\Exp [  \| Z_1 \|^2]   < \infty$ and
$\| \Exp [ Z_1 ] \|  \neq 0$. Then,
as $n \to \infty$,
\[ n^{-1/2} \left| L_n - \Exp [ L_n] -   \sum_{i=1}^n \frac{ 2 (Z_i - \Exp [ Z_1] ) \cdot \Exp [ Z_1]}{\| \Exp [Z_1] \|} \right| \to 0 , \ \textrm{in} \ L^2 .\]
 \end{theorem}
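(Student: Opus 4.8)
The plan is to reduce the claim to the moment bound $\Exp[(L_n - 2(S_n \cdot \be_0))^2] = o(n)$ for a rotated walk, and then to prove this using Cauchy's perimeter formula, the triangle inequality, and a moment estimate for the running maximum of a negative-drift random walk. Since $L_n$, $\Exp[L_n]$ and the scalars $(Z_i - \Exp[Z_1]) \cdot \Exp[Z_1] / \|\Exp[Z_1]\|$ are all unchanged if the plane is rotated, we may assume $\Exp[Z_1] = \mu \be_0$ with $\be_0 = (1,0)$ and $\mu = \|\Exp[Z_1]\| > 0$; writing $\be_\theta = (\cos\theta,\sin\theta)$, the sum appearing in the statement is then $2(S_n \cdot \be_0 - n\mu)$, which has mean zero, so the asserted $L^2$-convergence is equivalent to $\Var[L_n - 2(S_n \cdot \be_0)] = o(n)$, and it suffices to prove $\Exp[(L_n - 2(S_n \cdot \be_0))^2] = o(n)$.

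For this I would invoke Cauchy's formula $L_n = \int_0^\pi (M_n(\theta) - m_n(\theta))\,\ud\theta$, where $M_n(\theta) = \max_{0 \le j \le n} S_j \cdot \be_\theta$ and $m_n(\theta) = \min_{0 \le j \le n} S_j \cdot \be_\theta$, together with the elementary identity $2(S_n \cdot \be_0) = \int_0^\pi \mathrm{sgn}(\cos\theta)\,(S_n \cdot \be_\theta)\,\ud\theta$. Subtracting, $L_n - 2(S_n \cdot \be_0) = \int_0^\pi G_n(\theta)\,\ud\theta$, where $G_n(\theta) := (M_n(\theta) - m_n(\theta)) - \mathrm{sgn}(\cos\theta)(S_n \cdot \be_\theta) \ge 0$. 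For $\cos\theta > 0$ one has $G_n(\theta) = (M_n(\theta) - S_n \cdot \be_\theta) + (-m_n(\theta))$; the second term equals $\max_{0 \le j \le n} \sum_{l=1}^j (-Z_l \cdot \be_\theta)$, the running maximum of a walk whose increments have mean $-\mu\cos\theta < 0$, and the first term, after reversing the order of the increments $Z_1,\dots,Z_n$, has the same law as the second. For $\cos\theta < 0$ the two summands exchange roles. Hence in all cases $G_n(\theta) = A_n(\theta) + B_n(\theta)$ with $A_n(\theta), B_n(\theta) \ge 0$ each distributed as $\overline{W}_n(\theta) := \max_{0 \le j \le n} \sum_{l=1}^j \xi_l$, the running maximum of a random walk with i.i.d.\ increments of mean $-\mu|\cos\theta| < 0$ and absolute value at most $\|Z_l\|$.

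The crux is the estimate: \emph{if $\xi_1, \xi_2, \dots$ are i.i.d.\ with $\Exp[\xi_1] = -a < 0$ and $\Exp[\xi_1^2] < \infty$, then $\overline{W}_n := \max_{0\le j\le n}\sum_{l\le j}\xi_l$ has $\Exp[\overline{W}_n^2] = o(n)$, while $\Exp[\overline{W}_n^2] \le 4\Exp[\xi_1^2]\,n$ for every $n$.} The uniform bound is immediate from Doob's $L^2$ maximal inequality for the mean-zero martingale $\sum_{l\le j}(\xi_l + a)$, because $0 \le \overline{W}_n \le \max_{0\le j\le n}\sum_{l\le j}(\xi_l+a)$. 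For the $o(n)$ bound I would split $\xi_l = (\xi_l \wedge K_n) + (\xi_l - K_n)^+$ with $K_n \asymp \sqrt n$: the running maximum of the first part (bounded above, drift still $\le -a$) lies in $L^2$ with a bound of order $K_n/a$, by a Lorden-type inequality for stationary waiting times, while the second part is a sum of $n$ i.i.d.\ nonnegative terms of mean $\Exp[(\xi_1-K_n)^+]$ and variance $\le \Exp[\xi_1^2\1\{\xi_1 > K_n\}]$, whose second moment is at most $n\Exp[\xi_1^2\1\{\xi_1>K_n\}] + n^2\Exp[(\xi_1-K_n)^+]^2$; both are $o(n)$ because $\Exp[\xi_1^2\1\{\xi_1>K\}] \to 0$ and, crucially, $\Exp[(\xi_1-K)^+]$ is integrable in $K$ (a consequence of $\Exp[(\xi_1^+)^2] < \infty$) so that $\Exp[(\xi_1-K_n)^+] = o(n^{-1/2})$. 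Granting this estimate, $\Exp[\overline{W}_n(\theta)^2]/n \to 0$ for each fixed $\theta$ with $\cos\theta \ne 0$ while $\Exp[\overline{W}_n(\theta)^2]/n \le 4\Exp[\|Z_1\|^2]$ for all $\theta$, so by Minkowski's integral inequality and dominated convergence
\[ \big\| L_n - 2(S_n \cdot \be_0) \big\|_2 \;\le\; \int_0^\pi \| G_n(\theta) \|_2 \,\ud\theta \;\le\; 2\int_0^\pi \|\overline{W}_n(\theta)\|_2 \,\ud\theta \;=\; 2\sqrt{n}\int_0^\pi \sqrt{\Exp[\overline{W}_n(\theta)^2]/n}\,\ud\theta \;=\; o(\sqrt{n}), \]
which gives $\Exp[(L_n - 2(S_n \cdot \be_0))^2] = o(n)$, as required.

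The main obstacle is exactly this running-maximum estimate under only a finite second moment. The difficulty is that the stationary running maximum $\overline{W}_\infty = \sup_{j \ge 0}\sum_{l\le j}\xi_l$ need not lie in $L^2$ — it does so precisely when $\xi_1^+$ has a finite third moment — so one cannot pass to the monotone limit and must genuinely exploit the finite horizon, tuning the truncation level $K_n$ to grow fast enough (like $\sqrt n$) that the discarded drift-like contribution $n\Exp[(\xi_1-K_n)^+]$ is $o(\sqrt n)$ yet slowly enough that the truncated running maximum still has $L^2$-norm $o(\sqrt n)$; the compatibility of these two requirements is what uses the second-moment hypothesis. Everything else — Cauchy's formula, the time-reversal identity for $A_n(\theta)$, and the Doob and Lorden inequalities — is classical.
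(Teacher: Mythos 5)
Your approach is genuinely different from the paper's. The paper works through a martingale-difference (resampling) representation $L_n-\Exp[L_n]=\sum_i D_{n,i}$, shows using the drift and a high-probability ``argmax/argmin away from $i$'' event that $D_{n,i}$ is close in $L^2$ to $Y_i := 2(Z_i-\Exp Z_1)\cdot\Exp Z_1/\|\Exp Z_1\|$, and then sums by orthogonality. You instead observe that (after rotation) $\sum_i Y_i = 2(S_n\cdot\be_0-n\mu)$ and compare $L_n$ directly with $2S_n\cdot\be_0$: via Cauchy's formula this difference is an integral of a nonnegative quantity $G_n(\theta)$ which, by a neat time-reversal identity, splits into two copies of a one-dimensional running maximum $\overline{W}_n(\theta)$ of a walk with drift $-\mu|\cos\theta|$. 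This is a cleaner conceptual picture of why the approximation works, and it sidesteps the martingale machinery entirely. The trade-off is that it shifts all the difficulty onto a single analytic estimate which the paper never needs: that $\Exp[\overline{W}_n^2]=o(n)$ for a negative-drift walk whose increments have only a finite second moment (the paper's high-probability control of the argmax never touches second moments of the running maximum).

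That estimate is where the proposal has a genuine gap. You correctly identify it as ``the main obstacle,'' but the sketch is not yet a proof and one step is mislabelled. The claim that the truncated running maximum (increments $\le K_n$, drift $\le -a$) has $\Exp[(\cdot)^2]$ of order $K_n/a$ is not a ``Lorden-type inequality'': Lorden's inequality controls \emph{first} moments (expected overshoot / stationary mean), whereas you need a quantitative \emph{second}-moment bound, namely something of the form $\Exp[\overline{W}_\infty^2]\lesssim \Exp[(\xi^+)^3]/a + (\Exp[(\xi^+)^2]/a)^2$, which is a Kiefer--Wolfowitz/ladder-height moment estimate. That bound is classical and it does give your stated order (since $\Exp[(\xi^+\wedge K)^3]\le K\,\Exp[(\xi^+)^2]$), and with $K_n\asymp\sqrt n$ it yields $\Exp[A_n^2]=O(\sqrt n)=o(n)$ while the heavy-part estimate gives $n\Exp[\xi^2\1\{\xi>K_n\}]+n^2\Exp[(\xi-K_n)^+]^2=o(n)$, using that $K\mapsto\Exp[(\xi-K)^+]$ is nonincreasing and integrable (so $K\Exp[(\xi-K)^+]\to0$); one also has to check the integrability in $\theta$ near $\theta=\pi/2$, where $a=\mu|\cos\theta|\to0$, but your uniform Doob bound together with dominated convergence handles that. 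So the strategy is sound, but to make it complete you must state and prove (or precisely cite) the second-moment Kiefer--Wolfowitz bound rather than appeal to Lorden, since that is exactly the nontrivial ingredient your route requires and the paper's does not.
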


The subsequent sections of the paper present the proofs of these theorems. The main ingredients, which we present
in turn, include a martingale difference representation, Cauchy's formula from convex geometry, and an analysis
of the geometry of the convex hull via extrema (the strong law of large numbers with the non-zero drift provides much of
the regularity that we need).

To finish this section we discuss some simulations. We considered a specific form of random walk with increments $Z_i - \Exp [ Z_i ] = (\cos \Theta_i , \sin \Theta_i)$,
where $\Theta_i$ was uniformly distributed on $[0,2\pi)$, corresponding to a  uniform
distribution on a unit circle centred at $\Exp [Z_i] = (\mu , 0)$, say. We took one example with  $\mu  = 0$, and
two examples with $ \mu \neq 0$ of different magnitudes. In these latter cases, the results
above take the form:
$\lim_{n \to \infty} n^{-1} \Var[ L_n ] = 4 \Exp [ \cos^2 \Theta_1 ] =  2$ (Theorem \ref{thm1})
and $(2 n)^{-1/2} (L_n - \Exp [ L_n] )$ converges in distribution to a standard normal
distribution (Theorem \ref{thm2}). The corresponding pictures in Figures \ref{sim1} and \ref{sim2}
show an agreement between the simulations and theory.

The results of this paper do not cover the case where $\| \Exp [ Z_1 ] \| =0$. The simulations in this case
suggest that, for the example we considered, $\lim_{n \to \infty} n^{-1} \Var [ L_n]$ exists (see the leftmost plot in Figure \ref{sim1}), but
 Figure \ref{sim2} does not appear to be consistent with a normal distribution as a limiting distribution. The method of the present paper provides a promising approach to the zero-drift case,
but a new idea will be needed to gain control over the geometry in that case.

\begin{figure}[h]
  \centering
    \includegraphics[width=0.32\textwidth]{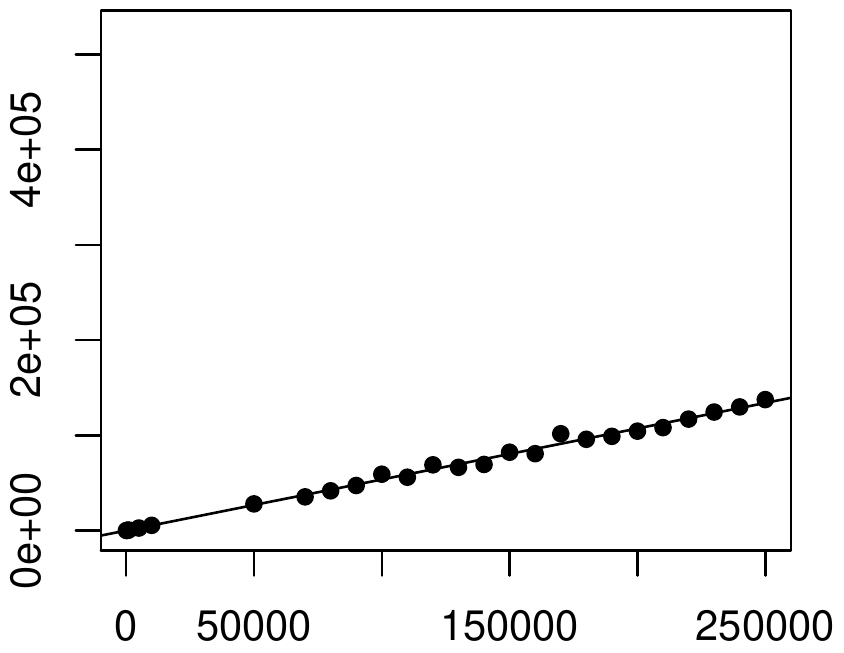}
\includegraphics[width=0.32\textwidth]{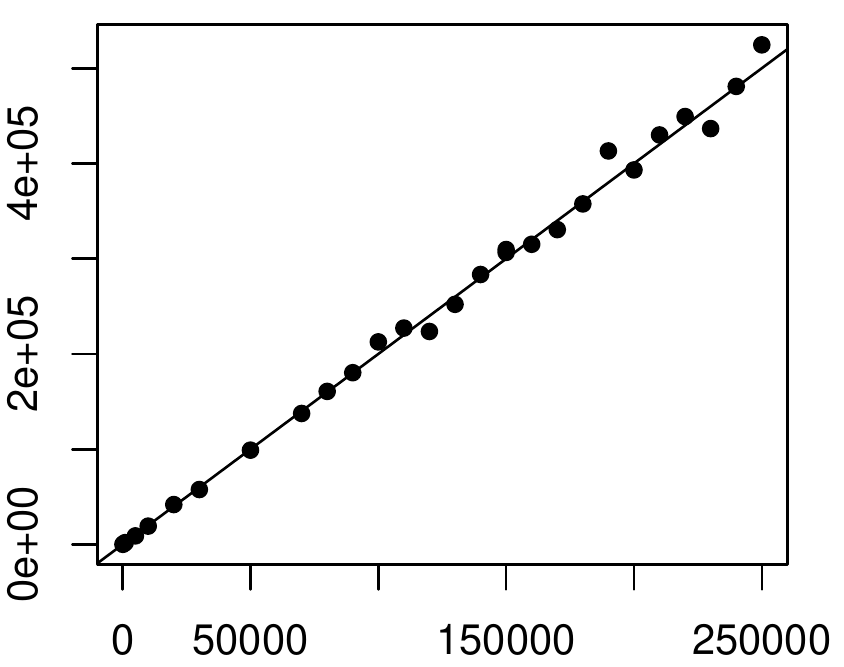}
\includegraphics[width=0.32\textwidth]{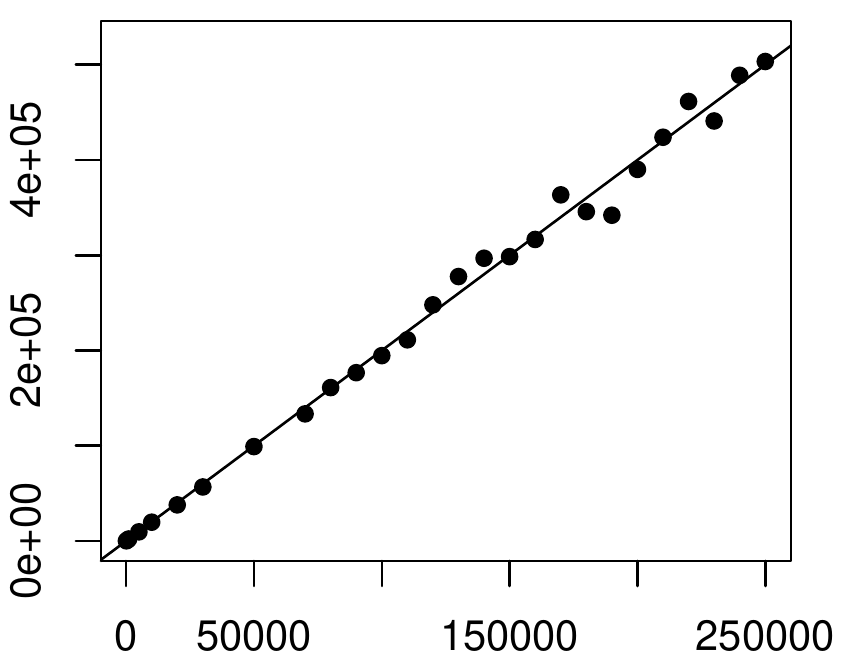}
\\
  \caption{Plots of $y = \Var[L_n]$ estimates against  $x = n$ for
about $25$ values of $n$ in the range $10^2$ to $2.5 \times 10^5$ for 3 examples
with $\mu =$ (left to right) $0$, $0.2$, $0.36$. Each point is
estimated from the sample variance of $10^3$ repeated simulations. 
Also plotted are straight lines $y = 0.536 x$ (leftmost plot)
and $y=2x$ (other two plots).
}\label{sim1}
\end{figure}

\begin{figure}[h]
  \centering
    \includegraphics[width=\textwidth]{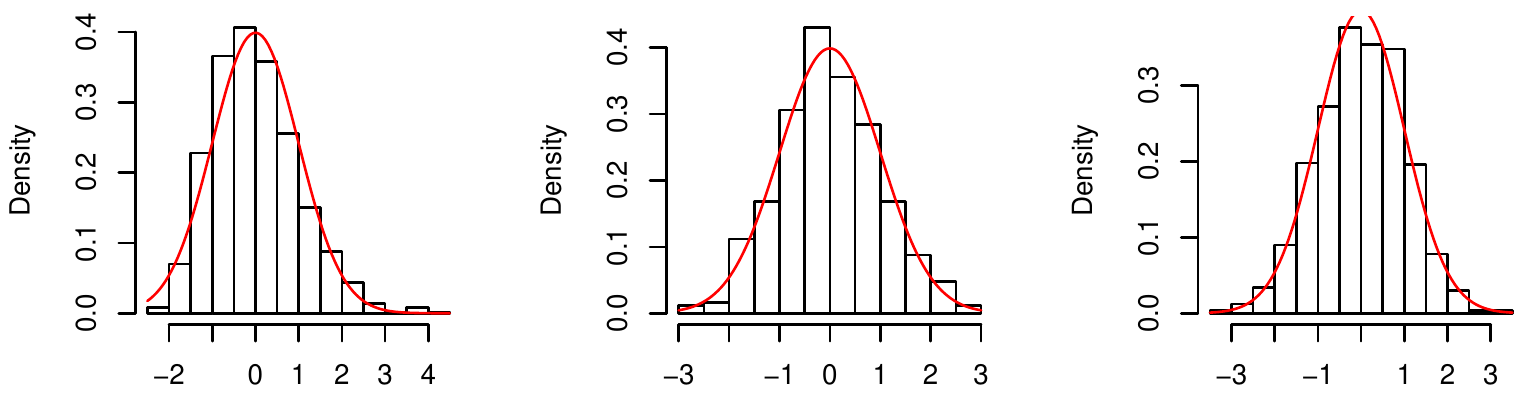}\\
  \caption{Simulated histogram estimates for the distribution of
  $\tfrac{L_n-\Exp [L_n]}{\sqrt{\Var[L_n]}}$
with  $n=5 \times 10^3$
in the three examples described in Figure \ref{sim1}. Each histogram is compiled from $10^3$ samples.}
\label{sim2}
\end{figure}

\section{Martingale difference representation}
\label{sec:martingale}

The first step in the proofs is a
 martingale difference argument, based on resampling members of the sequence
$Z_1, \ldots, Z_n$, to get an expression for $\Var [L_n]$ amenable to analysis.
Let $\FF_0$ denote the trivial $\sigma$-algebra, and for $n \in \N$ set $\FF_n := \sigma (Z_1, \ldots, Z_n)$, the $\sigma$-algebra generated
by the first $n$ steps of the random walk. Then $S_n$ is $\FF_n$-measurable, and for $n \in \N$ we can write
$L_n = \Lambda_n ( Z_1, \ldots, Z_n )$ for $\Lambda_n : \R^{2n} \to [0,\infty)$
 a measurable function.

Let $Z_1', Z_2',\ldots$ be an independent copy of the sequence $Z_1, Z_2, \ldots$.
Fix $n \in \N$. For $i \in \{1,\ldots, n\}$, we `resample' the $i$th increment, replacing $Z_i$ with $Z_i'$, as follows.
Set
\begin{equation}
\label{resample}
 S_j^{(i)} := \begin{cases} S_j & \textrm{ if } j < i \\
S_j - Z_i + Z_i' & \textrm{ if } j \geq i ;\end{cases} \end{equation}
then $(S_j^{(i)} ; 0 \leq j \leq n)$ is the random walk $(S_j ; 0 \leq j \leq n)$
but with the $i$th step independently resampled. We let $L_n^{(i)}$ denote
the perimeter length of the corresponding convex hull for this modified walk, namely
$\hull ( S_0^{(i)}, \ldots, S_n^{(i)} )$,
 i.e.,
\[ L_n^{(i)} := \Lambda_n (Z_1, \ldots, Z_{i-1}, Z'_i, Z_{i+1}, \ldots, Z_n ) .\]
For $i \in \{1,\ldots, n\}$, define
\begin{equation}
\label{dni}
 D_{n, i} :=  \Exp [ L_n - L_n^{(i)}\mid \FF_{i} ] ;\end{equation}
in words, $-D_{n,i}$ is the expected change in the perimeter length of the convex hull,
given $\FF_i$, on replacing $Z_i$ by $Z_i'$.
The point of this construction is the following result.

\begin{lemma}
\label{lem1}
Let $n \in \N$. Then (i) $L_n - \Exp [ L_n] = \sum_{i=1}^n D_{n,i}$; and (ii)
$\Var [ L_n ] = \sum_{i=1}^n \Exp [ D_{n,i}^2 ]$, whenever the latter sum is finite.
\end{lemma}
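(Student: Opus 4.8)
The plan is to recognise the identity in Lemma~\ref{lem1} as the Doob-martingale decomposition of $L_n$ along the filtration $(\FF_i)$, with the $D_{n,i}$ from (\ref{dni}) playing the role of the martingale differences; this is the resampling form of such a decomposition, familiar from the Efron--Stein circle of ideas. First I would record that $L_n$ is integrable: $\HH_n$ lies in the closed disc of radius $R_n := \max_{0 \le j \le n} \| S_j \|$ about $\0$, and perimeter is monotone under set inclusion among convex sets while a convex set inside a disc of radius $R$ has perimeter at most $2\pi R$, so $L_n \le 2\pi R_n \le 2\pi \sum_{i=1}^n \| Z_i \|$ and hence $\Exp [ L_n ] \le 2\pi n \, \Exp \| Z_1 \| < \infty$; the same pathwise bound shows $L_n \in L^2$ under the extra hypothesis $\Exp [ \| Z_1 \|^2 ] < \infty$. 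Thus $M_i := \Exp [ L_n \mid \FF_i ]$, for $0 \le i \le n$, is a well-defined martingale with $M_0 = \Exp [ L_n ]$ (as $\FF_0$ is trivial) and $M_n = L_n$ (as $L_n$ is $\FF_n$-measurable).

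The crux is the identity
\begin{equation}
\label{keyresample}
\Exp [ L_n^{(i)} \mid \FF_i ] = \Exp [ L_n \mid \FF_{i-1} ] = M_{i-1} , \qquad 1 \le i \le n ,
\end{equation}
from which $D_{n,i} = \Exp [ L_n - L_n^{(i)} \mid \FF_i ] = M_i - M_{i-1}$, and then part (i) follows by telescoping: $\sum_{i=1}^n D_{n,i} = M_n - M_0 = L_n - \Exp [ L_n ]$. To prove (\ref{keyresample}) I would argue as follows. By construction $L_n^{(i)} = \Lambda_n ( Z_1, \ldots, Z_{i-1}, Z_i', Z_{i+1}, \ldots, Z_n )$ does not involve $Z_i$, and the block $( Z_i', Z_{i+1}, \ldots, Z_n )$ is independent of $\FF_i = \sigma ( Z_1, \ldots, Z_i )$; so by Fubini's theorem $\Exp [ L_n^{(i)} \mid \FF_i ] = g ( Z_1, \ldots, Z_{i-1} )$, where $g ( z_1, \ldots, z_{i-1} ) := \Exp [ \Lambda_n ( z_1, \ldots, z_{i-1}, Z_i', Z_{i+1}, \ldots, Z_n ) ]$, and since $g ( Z_1, \ldots, Z_{i-1} )$ is $\FF_{i-1}$-measurable the same expression also equals $\Exp [ L_n^{(i)} \mid \FF_{i-1} ]$. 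Finally $( Z_i', Z_{i+1}, \ldots, Z_n )$ and $( Z_i, Z_{i+1}, \ldots, Z_n )$ have the same law and are both independent of $\FF_{i-1}$, whence $g ( Z_1, \ldots, Z_{i-1} ) = \Exp [ \Lambda_n ( Z_1, \ldots, Z_n ) \mid \FF_{i-1} ] = M_{i-1}$, as claimed.

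For part (ii): $( D_{n,i} )_{1 \le i \le n}$ is a martingale difference sequence for $( \FF_i )$, so whenever its terms lie in $L^2$ they are pairwise orthogonal there. If $\sum_{i=1}^n \Exp [ D_{n,i}^2 ] < \infty$, then, being a finite sum of non-negative numbers, each summand is finite, so each $D_{n,i} \in L^2$; by part (i), $L_n - \Exp [ L_n ] = \sum_{i=1}^n D_{n,i} \in L^2$, and expanding the square and discarding the vanishing cross terms gives $\Var [ L_n ] = \Exp \big[ \big( \sum_{i=1}^n D_{n,i} \big)^2 \big] = \sum_{i=1}^n \Exp [ D_{n,i}^2 ]$, as required. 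Under the standing second-moment hypothesis of Theorems~\ref{thm1} and~\ref{thm2} this case always applies, by the pathwise bound recorded above.

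I do not anticipate a serious obstacle here: (\ref{keyresample}) is a bookkeeping argument combining independence, Fubini, and the i.i.d.\ structure of the increments, and it uses nothing about the geometry of convex hulls — that geometry enters only later, in estimating the $D_{n,i}$. The one point that requires care is keeping straight which variables are conditioned on and which are integrated out when passing between $\FF_i$ and $\FF_{i-1}$.
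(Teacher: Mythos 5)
Your proof is correct and follows the same route as the paper: rewrite $D_{n,i}=\Exp[L_n\mid\FF_i]-\Exp[L_n\mid\FF_{i-1}]$ via the resampling identity $\Exp[L_n^{(i)}\mid\FF_i]=\Exp[L_n^{(i)}\mid\FF_{i-1}]=\Exp[L_n\mid\FF_{i-1}]$, then telescope for (i) and use orthogonality of martingale differences for (ii). You simply make explicit the integrability of $L_n$ and the Fubini-style bookkeeping behind the resampling identity, which the paper states in one line.
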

\begin{proof}
We may rewrite (\ref{dni}) as
\begin{equation}
\label{dni2}
 D_{n, i} = \Exp [ L_n \mid \FF_i ] -  \Exp [ L_n \mid \FF_{i-1} ] .\end{equation}
Indeed, since $L_n^{(i)}$ is independent of $Z_i$, $\Exp [ L_n^{(i)} \mid \FF_i ] =  \Exp [ L_n^{(i)} \mid \FF_{i-1} ] = \Exp [ L_n \mid \FF_{i-1} ]$.
Hence by (\ref{dni2}), we obtain the representation
$\sum_{i=1}^n D_{n,i} = \Exp [ L_n \mid \FF_n ] - \Exp [ L_n \mid \FF_0 ]$, giving (i).
Here $(D_{n,i} ; 1 \leq i \leq n)$ is a martingale difference sequence, since $D_{n,i}$ is
$\FF_i$-measurable and $\Exp [ D_{n,i} \mid \FF_{i-1} ] = 0$. By orthogonality of
martingale differences (see e.g.\ \cite[p.\ 218]{durrett}),
\[ \Var [ L_n ] = \Var \sum_{i=1}^n D_{n,i} = \sum_{i=1}^n \Var [ D_{n,i} ] = \sum_{i=1}^n \Exp [ D_{n,i}^2 ] ,\]
since $\Exp [ D_{n,i} ] =0$, which gives (ii).
\end{proof}

\begin{remark}
Lemma \ref{lem1}   with the conditional Jensen's inequality gives the upper bound
\[ \Var [L_n]  \leq \sum_{i=1}^n \Exp [ (L_n^{(i)} - L_n)^2 ] ,\]
which is a factor of $2$ larger than the upper bound obtained from the Efron--Stein inequality: see equation (2.3) in \cite{ss}.
\end{remark}

\section{Cauchy formula}
\label{sec:cauchy}

Let $\be_\theta = (\cos \theta, \sin \theta)$ be the unit vector in direction  $\theta \in (-\pi, \pi]$.
For $\theta \in [0,\pi]$, define
\[ M_n (\theta) := \max_{0 \leq j \leq n} ( S_j \cdot \be_\theta ) , \textrm{ and }  m_n (\theta) := \min_{0 \leq j \leq n} ( S_j \cdot \be_\theta ) .\]
Note that since $S_0 = \0$, we have $M_n (\theta) \geq 0$ and $m_n (\theta) \leq 0$, a.s.
In the present setting (see \cite{ss}, formula (2.1)), Cauchy's formula for convex sets yields
\[  L_n =  \int_0^\pi \left(  M_n (\theta) - m_n (\theta) \right) \ud \theta = \int_0^\pi R_n (\theta)  \ud \theta ,\]
where  $R_n (\theta) :=  M_n (\theta) - m_n (\theta) \geq 0$ is the {\em parametrized range function}. Similarly, when the $i$th increment
is resampled as described in Section \ref{sec:martingale},
\[ L_n^{(i)} = \int_0^\pi \left(  M^{(i)}_n (\theta) - m^{(i)}_n (\theta) \right) \ud \theta = \int_0^\pi R^{(i)}_n (\theta)  \ud \theta ,\]
where $R_n^{(i)} (\theta ) =  M^{(i)}_n (\theta) - m^{(i)}_n (\theta)$, defining
\[ M^{(i)}_n (\theta) := \max_{0 \leq j \leq n} ( S^{(i)}_j \cdot \be_\theta ) , \textrm{ and }  m^{(i)}_n (\theta) := \min_{0 \leq j \leq n} ( S^{(i)}_j \cdot \be_\theta ) .\]
Thus to study $D_{n,i} = \Exp [ L_n - L_n^{(i)} \mid \FF_i]$ we will consider
\begin{equation}
\label{cauchy}
 L_n -  L_n^{(i)} = \int_0^\pi \left(  R_n (\theta) -R^{(i)}_n (\theta)  \right) \ud \theta
 = \int_0^\pi \Delta^{(i)}_{n} (\theta) \ud \theta,\end{equation}
where $\Delta^{(i)}_{n} (\theta) :=  R_n (\theta) - R^{(i)}_n (\theta)$.
For $\theta \in [0,\pi]$, let
\[ \ubar J_{n} (\theta) := \argmin_{0 \leq j \leq n } ( S_j \cdot \be_\theta ) , \textrm{ and } \bar J_{n} (\theta) := \argmax_{0 \leq j \leq n } ( S_j \cdot \be_\theta ) ,\]
so $m_n (\theta) = S_{\ubar J_n (\theta)} \cdot \be_\theta$ and
$M_n (\theta) = S_{\bar J_n (\theta)} \cdot \be_\theta$.  Similarly, recalling (\ref{resample}), define
\[ \ubar J^{(i)}_{n} (\theta) := \argmin_{0 \leq j \leq n } ( S^{(i)}_j \cdot \be_\theta ) , \textrm{ and } \bar J^{(i)}_{n} (\theta) := \argmax_{0 \leq j \leq n } ( S^{(i)}_j \cdot \be_\theta ) .\]

We will use the following simple bound repeatedly in the arguments that follow. In fact, with a little more work one can reduce the bound on the right-hand side of (\ref{deltabound}) by a factor of 2 (cf \cite{ss}, Lemma 2.1),
but the form given here is good enough for us.

\begin{lemma}
Almost surely, for any $\theta \in [0,\pi]$ and any $i \in \{ 1,2 ,\ldots , n\}$,
\begin{equation}
\label{deltabound}
 | \Delta_n ^{(i)} (\theta ) |  \leq 2 \| Z_i\| + 2 \| Z_i ' \| . \end{equation}
\end{lemma}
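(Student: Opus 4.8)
The plan is to bound the max-part and the min-part of $\Delta_n^{(i)}(\theta)$ separately and then add. By definition $\Delta_n^{(i)}(\theta) = (M_n(\theta)-m_n(\theta)) - (M_n^{(i)}(\theta)-m_n^{(i)}(\theta))$, so the triangle inequality gives
\[
| \Delta_n^{(i)}(\theta) | \le | M_n(\theta) - M_n^{(i)}(\theta) | + | m_n(\theta) - m_n^{(i)}(\theta) | ,
\]
and it suffices to show each term on the right is at most $\| Z_i \| + \| Z_i' \|$.

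The key structural observation is that resampling the $i$th step only \emph{translates} the relevant projections by a single scalar. Indeed, from (\ref{resample}), for every $j$ we have $S_j^{(i)} \cdot \be_\theta = S_j \cdot \be_\theta$ if $j < i$, while $S_j^{(i)} \cdot \be_\theta = S_j \cdot \be_\theta - c$ for $j \ge i$, where $c := (Z_i - Z_i') \cdot \be_\theta$ does not depend on $j$. Writing $A := \max_{0 \le j < i} (S_j \cdot \be_\theta)$ and $B := \max_{i \le j \le n} (S_j \cdot \be_\theta)$ (with $A := -\infty$ if $i = 0$, which does not arise here), we then have $M_n(\theta) = \max(A,B)$ and $M_n^{(i)}(\theta) = \max(A, B - c)$. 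I would next invoke the elementary fact that shifting some of the arguments of a maximum by a scalar changes the maximum by at most the absolute value of that scalar: $|\max(A, B-c) - \max(A,B)| \le |c|$, which follows from $\max(A,B) - |c| \le \max(A, B-|c|) \le \max(A,B-c) \le \max(A,B) + |c|$. The identical argument applies to the minima, giving $|m_n(\theta) - m_n^{(i)}(\theta)| \le |c|$ as well. Finally, Cauchy--Schwarz and the triangle inequality yield $|c| = |(Z_i - Z_i') \cdot \be_\theta| \le \| Z_i - Z_i' \| \le \| Z_i \| + \| Z_i' \|$, since $\be_\theta$ is a unit vector. Combining the two bounds gives (\ref{deltabound}).

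There is essentially no serious obstacle here; the argument is elementary and holds pathwise, for every fixed $\theta$ and every realization, with the stated a.s.\ qualifier only needed because $Z_i, Z_i'$ are random. The one point requiring a touch of care is matching the factor of $2$: the cruder route above loses a factor of $2$ because it bounds the max-shift and the min-shift independently, whereas the sharper bound (the factor-$2$ improvement alluded to before the lemma, as in \cite{ss}) would exploit that the extremal index can change in only one ``direction'' relative to the shift. Since the weaker constant suffices for all subsequent estimates, I would not pursue that refinement.
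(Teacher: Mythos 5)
Your proof is correct and rests on the same basic mechanism as the paper's: resampling $Z_i$ shifts the post-$i$ portion of the projected trajectory $(S_j\cdot\be_\theta)_{j\ge i}$ by the fixed scalar $c=(Z_i-Z_i')\cdot\be_\theta$, so both $M_n(\theta)$ and $m_n(\theta)$ can move by at most $|c|\le\|Z_i\|+\|Z_i'\|$. The paper phrases this via the argmax $\bar J_n(\theta)$ and a case split on whether $\bar J_n(\theta)<i$ or $\bar J_n(\theta)\ge i$; you instead write $M_n(\theta)=\max(A,B)$, $M_n^{(i)}(\theta)=\max(A,B-c)$ and invoke the Lipschitz property of the max under a shift of a subset of its arguments. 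These are interchangeable presentations of the same observation, and yours is arguably a touch cleaner. Your closing remark about the factor-$2$ improvement is also on the right lines: when $c>0$, both $M_n-M_n^{(i)}$ and $m_n-m_n^{(i)}$ lie in $[0,c]$ (and in $[c,0]$ when $c<0$), so their difference $\Delta_n^{(i)}(\theta)$ is already bounded by $|c|$; adding the two one-sided bounds, as both you and the paper do, sacrifices this cancellation but suffices for the rest of the argument.
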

 \begin{proof}
The triangle inequality implies that
\[  | \Delta_n ^{(i)} (\theta ) | \leq | M_n^{(i)} (\theta) - M_n (\theta) | + | m_n^{(i)} (\theta) - m_n (\theta) | .\]
For some $\bar J_n (\theta) \in \{0,1,\ldots, n\}$,
we have $M_n(\theta) = S_{\bar J_n (\theta)} \cdot \be_\theta$ and, by definition, $M_n^{(i)} (\theta) \geq S_{\bar J_n (\theta)}^{(i)} \cdot \be_\theta$.
If $\bar J_n (\theta) < i$, then, by (\ref{resample}),
$S_{\bar J_n (\theta)}^{(i)} = S_{\bar J_n (\theta)}$ and so $M_n^{(i)} (\theta) \geq M_n (\theta)$.
Otherwise, if  $\bar J_n (\theta) \geq  i$, then, by (\ref{resample}),
 $S_{\bar J_{n} (\theta) }^{(i)}  = S_{\bar J_{n} (\theta)} -Z_i + Z_i'$ and so
\begin{align*} M_n^{(i)} (\theta) & \geq   S_{\bar J_{n} (\theta)} \cdot \be_\theta - Z_i \cdot \be_\theta + Z_i' \cdot \be_\theta \\
& \geq M_n (\theta)- \| Z_i \| - \| Z_i ' \| . \end{align*}
Hence we conclude that, a.s., $M_n^{(i)} (\theta) \geq M_n (\theta)- \| Z_i \| - \| Z_i ' \|$.
The analogous argument in the other direction shows that $| M_n^{(i)} (\theta) - M_n (\theta) | \leq \| Z_i \| + \| Z_i ' \|$. Moreover,
a similar argument shows that the same bound holds for $| m_n^{(i)} (\theta) - m_n (\theta) |$, and (\ref{deltabound}) follows.
\end{proof}

\section{Control of extrema}
\label{sec:extrema}

For the remainder of the paper, without loss of generality, we 
suppose that $\Exp [ Z_1 ] = \mu \be_{\pi/2}$
with $\mu \in (0,\infty)$.
Observe that $( S_j \cdot \be_\theta ; 0 \leq j \leq n)$ is a one-dimensional random walk: indeed,
$S_j \cdot \be_\theta = \sum_{k=1}^j Z_k \cdot \be_\theta$. The mean drift of this one-dimensional random walk is
\begin{equation}
 \label{mutheta}
 \Exp [ Z_1 \cdot \be_\theta ] = \Exp [ Z_1] \cdot \be_\theta = \mu \sin \theta   .
\end{equation}

Note that the drift $\mu \sin \theta$ is positive if $\theta \in (0, \pi)$.
This crucial fact gives us control over the behaviour of the extrema such as $M_n  (\theta)$ and $m_n (\theta)$
that contribute to (\ref{cauchy}), and this will allow us to estimate the conditional expectation of the final term in (\ref{cauchy})
(see Lemma \ref{estimate} below).

For  $\gamma \in (0,1/2)$ and  $\delta \in (0,\pi/2)$
(two constants that will be chosen to be suitably small later in our arguments), we denote by $E_{n,i} (\delta, \gamma)$ the event that the following   occur:
\begin{itemize}
\item for all $\theta \in [\delta, \pi - \delta ]$,
$\ubar J_{n} (\theta) < \gamma  n$ and $\bar J_{n} (\theta) > (1 - \gamma) n$;
\item for all $\theta \in [\delta, \pi - \delta ]$,
$\ubar J^{(i)}_{n} (\theta) < \gamma n$ and $\bar J^{(i)}_{n} (\theta) > (1 - \gamma) n$.
\end{itemize}
We write $E^\rc_{n,i} (\delta, \gamma)$ for the complement of $E_{n,i} (\delta, \gamma)$. The  idea is that $E_{n,i} (\delta, \gamma)$ will occur with high probability, and on this event
we have good control over $\Delta^{(i)}_{n} (\theta)$.
The next result  formalizes  these assertions. For $\gamma \in (0,1/2)$, define $I_{n, \gamma} := \{1,\ldots, n\} \cap [ \gamma n , (1-\gamma) n ]$.

\begin{lemma}
\label{En}
For any $\gamma \in (0,1/2)$ and any $\delta \in (0,\pi/2)$, the following hold.
\begin{itemize}
\item[(i)]
If $i \in I_{n,\gamma}$, then, a.s., for any $\theta \in [\delta, \pi - \delta ]$,
\begin{equation}
\label{change}
  \Delta_n^{(i)} (\theta) \1 ( E_{n,i}(\delta,\gamma) )
=   ( Z_i - Z'_i ) \cdot \be_\theta  \1 ( E_{n,i}(\delta,\gamma) )  .
\end{equation}
\item[(ii)] If  $\Exp \| Z_1 \| < \infty$ and $\| \Exp [ Z_1]\| \neq 0$, then
  $\min_{1 \leq i \leq n} \Pr [ E_{n,i} (\delta, \gamma) ] \to 1$ as $n \to \infty$.
\end{itemize}
\end{lemma}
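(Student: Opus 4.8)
The plan is to handle the two parts separately, with part (ii) doing most of the real work and part (i) being essentially a bookkeeping consequence of the definition of $E_{n,i}(\delta,\gamma)$.

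For part (i), fix $i \in I_{n,\gamma}$ and $\theta \in [\delta, \pi-\delta]$, and work on the event $E_{n,i}(\delta,\gamma)$. By definition of this event, the argmin indices $\ubar J_n(\theta)$ and $\ubar J_n^{(i)}(\theta)$ both lie strictly below $\gamma n \leq i$, while the argmax indices $\bar J_n(\theta)$ and $\bar J_n^{(i)}(\theta)$ both lie strictly above $(1-\gamma)n \geq i$. Since $i \in I_{n,\gamma}$ sits between these thresholds, the minimizing index is $< i$, so by (\ref{resample}) the corresponding partial sum is unchanged by the resampling: $S^{(i)}_{\ubar J_n(\theta)} = S_{\ubar J_n(\theta)}$, and hence $m_n^{(i)}(\theta) \leq m_n(\theta)$; symmetrically, one needs the reverse inequality, which follows because $\ubar J_n^{(i)}(\theta) < i$ too, so $S_{\ubar J_n^{(i)}(\theta)}$ is a common value for both walks and $m_n(\theta) \leq m_n^{(i)}(\theta)$. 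Therefore $m_n^{(i)}(\theta) = m_n(\theta)$. For the maxima, the maximizing index is $\geq i$, so by (\ref{resample}) every partial sum from index $i$ onward is shifted by the fixed vector $-Z_i + Z_i'$; this shifts $M_n^{(i)}(\theta)$ relative to $M_n(\theta)$ by exactly $(Z_i' - Z_i)\cdot\be_\theta$ (using that the maximizer for both walks lies in the shifted range, again by the event). Hence $R_n^{(i)}(\theta) - R_n(\theta) = M_n^{(i)}(\theta) - M_n(\theta) = (Z_i' - Z_i)\cdot\be_\theta$, which rearranges to (\ref{change}).

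For part (ii), the task is to show that the thresholds defining $E_{n,i}(\delta,\gamma)$ are met uniformly over $\theta \in [\delta,\pi-\delta]$ and uniformly over $i$, with probability tending to $1$. The key input is (\ref{mutheta}): for each such $\theta$, $(S_j\cdot\be_\theta)_{j\geq 0}$ is a one-dimensional walk with drift $\mu\sin\theta \geq \mu\sin\delta > 0$, so by the strong law of large numbers $S_j\cdot\be_\theta \to +\infty$ a.s., which forces $\ubar J_n(\theta)$ to stabilize (it is eventually the index of the overall minimum of a walk drifting to $+\infty$) and forces $\bar J_n(\theta) = n$ for all large $n$. To get this \emph{uniformly in $\theta$ and quantitatively}, I would argue as follows: the event $\{\bar J_n(\theta) \leq (1-\gamma)n\}$ means some partial sum among indices $(1-\gamma)n < j \leq n$ fails to exceed $\max_{0\leq k\leq n} S_k\cdot\be_\theta$; this is controlled by showing $\min_{(1-\gamma)n \leq j \leq n}(S_n - S_j)\cdot\be_\theta > 0$ and $S_n\cdot\be_\theta > \max_{0\leq k\leq (1-\gamma)n} S_k\cdot\be_\theta$ — i.e., that the walk on the last $\gamma n$ steps does not drop back by more than its net gain, which is a standard maximal-inequality / LLN estimate with the drift term $\gamma n\mu\sin\delta$ dominating fluctuations of order $\sqrt{n}$ (Chebyshev on $\Var[(S_n - S_j)\cdot\be_\theta] \leq (n-j)\Exp\|Z_1\|^2$ suffices for the second-moment case, and a truncation argument handles the bare first-moment case). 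A covering argument reduces the supremum over $\theta \in [\delta,\pi-\delta]$ to finitely many directions, using that $\theta \mapsto S_j\cdot\be_\theta$ is Lipschitz with constant $\|S_j\|$ controlled via $\Exp\|Z_1\|$; and the resampled walk $(S_j^{(i)})$ differs from $(S_j)$ by at most $\|Z_i\| + \|Z_i'\|$ at each index, a perturbation negligible compared to the $\Theta(n)$ drift separation, which yields the uniformity in $i$ after discarding a further event of probability $o(1)$ (e.g. $\{\max_{1\leq i \leq n}\|Z_i\| > \eps n\}$, which has probability $o(1)$ under $\Exp\|Z_1\| < \infty$, or even $\Exp\|Z_1\|^2 < \infty$, by a union bound).

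I expect the main obstacle to be the uniformity in part (ii): simultaneously controlling all $\theta \in [\delta,\pi-\delta]$, all resampling indices $i$, and the gap between $\gamma n$-type thresholds and $n$ requires combining a maximal inequality for the drifted walk (to say the walk, once it has climbed, does not come back down) with a mild modulus-of-continuity estimate in $\theta$ and a crude uniform bound on the resampling perturbation. The driftless fluctuations are only of order $\sqrt n \log n$ or so, comfortably beaten by the linear drift $n\mu\sin\delta$, so the estimates are not delicate, but assembling them into a single statement that is uniform in $i$ over \emph{all} of $\{1,\dots,n\}$ (not just $I_{n,\gamma}$) while keeping the probability bound $\to 1$ is where the care is needed. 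Everything else is either the definition-chase of part (i) or a routine Chebyshev/SLLN computation.
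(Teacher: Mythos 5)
Your part (i) is essentially the paper's argument: on $E_{n,i}(\delta,\gamma)$ both argmin indices lie strictly before $i$, so the minima coincide, while both argmax indices lie strictly after $i$, where the two walks differ by the constant $Z_i'-Z_i$, so the maxima differ by exactly $(Z_i'-Z_i)\cdot\be_\theta$.

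For part (ii) you take a genuinely different and considerably heavier route. Your plan (Chebyshev/maximal-inequality estimates plus truncation for the first-moment case, a finite covering of $[\delta,\pi-\delta]$ with a Lipschitz modulus-of-continuity bound, and a perturbation estimate to transfer from $(S_j)$ to the resampled walk $(S_j^{(i)})$) could probably be pushed through, but it misses two observations that make the paper's proof short and entirely first-moment. First, no covering argument is needed: by Cauchy--Schwarz, $\bigl| n^{-1}S_n\cdot\be_\theta - \mu\sin\theta \bigr| = \bigl| (n^{-1}S_n - \Exp[Z_1])\cdot\be_\theta \bigr| \leq \| n^{-1}S_n - \Exp[Z_1]\|$, and the right-hand side is independent of $\theta$ and tends to $0$ a.s.\ by the SLLN, so one gets a single a.s.\ finite random time $N$ beyond which the projected walk is within $\eps_1 n$ of $\mu n\sin\theta$ simultaneously for all $\theta$. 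Second, uniformity in $i$ is also free: since $Z_i'$ has the same law as $Z_i$, the resampled walk $(S_j^{(i)};0\leq j\leq n)$ has \emph{exactly the same distribution} as $(S_j;0\leq j\leq n)$, so the failure probability of the argmin/argmax conditions for the resampled walk equals that for the original walk, independently of $i$, and a union bound finishes; there is no need for a perturbation bound or to discard an event like $\{\max_i\|Z_i\|>\eps n\}$. With these two reductions the remaining work is deterministic bookkeeping on $\{N\leq n\}$: choose $\eps_1 < \tfrac{1}{2}\gamma\mu\sin\delta$ so that $(\mu\sin\theta-\eps_1)n > (\mu\sin\theta+\eps_1)(1-\gamma)n$, handle the $j\leq N$ piece via $\Pr[\max_{j\leq N}\|S_j\|>cn]\to 0$, and conclude. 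A small slip in your sketch: the condition $\min_{(1-\gamma)n\leq j\leq n}(S_n-S_j)\cdot\be_\theta>0$ cannot hold as stated (the $j=n$ term is zero) and is in any case redundant once you have $S_n\cdot\be_\theta>\max_{0\leq k\leq(1-\gamma)n}S_k\cdot\be_\theta$.
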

\begin{proof}
First we prove part (i).
Suppose that $i \in I_{n,\gamma}$, so $\gamma n \leq i \leq (1-\gamma) n$.
Suppose that $\theta \in [\delta, \pi-\delta ]$.
Then on $E_{n,i} (\delta,\gamma)$, we have $\ubar J_n (\theta) < i < \bar J_n (\theta)$ and
$\ubar J_n^{(i)} (\theta) < i < \bar J_n^{(i)} (\theta)$.
Then from (\ref{resample}) it follows that in fact
$\ubar J_n (\theta) =\ubar J_n^{(i)} (\theta)$
and $\bar J_n (\theta) =\bar J_n^{(i)} (\theta)$. Hence
$m_n (\theta) = m_n^{(i)} (\theta)$ and
 $M_n^{(i)} (\theta) = S^{(i)}_{\bar J_n (\theta)} \cdot \be_\theta = M_n (\theta) + (Z_i'-Z_i) \cdot \be_\theta$, by (\ref{resample}).
Equation (\ref{change}) follows.

Next we prove part (ii). Suppose that $\mu = \| \Exp [ Z_1 ] \| >0$.
Since $\Exp   \| Z_1 \| < \infty$, the strong law of large
numbers implies that $\| n^{-1} S_n - \Exp [ Z_1 ] \| \to 0$, a.s.,
as $n \to \infty$. In other words, for any $\eps_1 >0$, there
exists $N:= N (\eps_1)$ such that $\Pr [ N < \infty ] =1$
and
$ \| n^{-1} S_n - \Exp [Z_1] \| < \eps_1$ for all $n \geq N$.
In particular, for $n \geq N$, by (\ref{mutheta}),
\begin{equation}
 \label{project}
\left| n^{-1} S_n \cdot \be_\theta - \mu \sin \theta \right|
=
\left| n^{-1} S_n \cdot \be_\theta - \Exp [ Z_1] \cdot \be_\theta \right|
\leq \left\|  n^{-1} S_n - \Exp [Z_1] \right\| < \eps_1 ,\end{equation}
for all $\theta \in [0, 2\pi)$.

Take $\eps_1 < \mu \sin \delta$.
If $n \geq N$, then, by (\ref{project}),
\[ S_n \cdot \be_\theta > ( \mu \sin \theta - \eps_1 ) n \geq ( \mu \sin \delta - \eps_1 ) n,\]
provided $\theta \in [\delta, \pi-\delta]$. By choice
of $\eps_1$, the last term in the previous display is strictly
positive. Hence, for $n \geq N$, for any $\theta \in [\delta, \pi-\delta]$,
$S_n \cdot \be_\theta >0$. But, $S_0 \cdot \be_\theta =0$. So
\[ \Pr \left[  \cap_{\theta \in [\delta, \pi-\delta]}
\{ \ubar J_n (\theta) < \gamma n \} \right]
\geq \Pr [ N < \gamma n ] \to 1 ,\]
as $n \to \infty$, since $N < \infty$ a.s.

Now,
\begin{equation}
 \label{max}
 \max_{0 \leq j \leq (1-\gamma) n } S_j \cdot \be_\theta \leq
  \max \left\{ \max_{0 \leq j \leq N} S_j \cdot \be_\theta , \max_{N \leq j \leq (1-\gamma) n }
S_j \cdot \be_\theta \right\} .\end{equation}
For the final term on the right-hand side of (\ref{max}), (\ref{project}) implies that
\[ \max_{N \leq j \leq (1-\gamma) n }
S_j \cdot \be_\theta
\leq \max_{0 \leq j \leq (1-\gamma) n } ( \mu \sin \theta + \eps_1 ) j
\leq ( \mu \sin \theta + \eps_1 ) (1-\gamma ) n .\]
 On the other hand, if $n \geq N$, then (\ref{project}) implies that
$S_n \cdot \be_\theta \geq ( \mu \sin \theta - \eps_1 ) n$.
Here $\mu \sin \theta - \eps_1 \geq ( \mu \sin \theta + \eps_1 )(1-\gamma)$
if $\eps_1 < \frac{\gamma \mu \sin \theta}{2-\gamma}$.
Now we choose $\eps_1 < \frac{\gamma \mu \sin \delta}{2}$.
Then, for any $\theta \in [\delta, \pi-\delta]$, we have that,
for $n \geq N$,
\[ S_n \cdot \be_\theta >  \max_{N \leq j \leq (1-\gamma) n }
S_j \cdot \be_\theta  .\]
Hence, by (\ref{max}),
\begin{align*}
 \Pr \left[  \cap_{\theta \in [\delta, \pi-\delta]}
\{ \bar J_n (\theta) > (1-\gamma) n \} \right]
& \geq \Pr \left[
 \cap_{\theta \in [\delta, \pi-\delta]} \left\{
S_n \cdot \be_\theta >  \max_{0 \leq j \leq (1-\gamma) n } S_j \cdot \be_\theta
\right\} \right] \\
& \geq
\Pr \left[ N \leq n ,  \, \cap_{\theta \in [\delta, \pi-\delta]} \left\{
S_n \cdot \be_\theta >  \max_{0 \leq j \leq N } S_j \cdot \be_\theta
\right\}   \right] . \end{align*}
 Also, for $n \geq N$, $S_n \cdot \be_\theta > (1- \frac{\gamma}{2}) \mu n \sin \delta$,
 so we obtain
\begin{align*} \Pr \left[  \cap_{\theta \in [\delta, \pi-\delta]}
\{ \bar J_n (\theta) > (1-\gamma) n \} \right]
  \geq
\Pr \left[ N \leq n ,  \, \max_{0 \leq j \leq N} \|  S_j \| \leq \left(1- \frac{\gamma}{2}\right) \mu n \sin \delta   \right] ,
\end{align*}
using the fact that $\max_{0 \leq j \leq N}  S_j \cdot \be_\theta
\leq \max_{0 \leq j \leq N} \|  S_j \|$ for all $\theta$.

Now, as $n \to \infty$,
 $\Pr [ N > n ] \to 0$, and
\[ \Pr  \left[ \max_{0 \leq j \leq N} \| S_j \|  > \left(1- \frac{\gamma}{2}\right) \mu n \sin \delta   \right] \to 0,\]
since $N < \infty$ a.s.
So we conclude that
\[  \Pr \left[  \cap_{\theta \in [\delta, \pi-\delta]}
\{ \ubar J_n (\theta) < \gamma n , \, \bar J_n (\theta) > (1-\gamma) n \} \right] \to 1,\]
as $n \to \infty$, and the same result holds for $\ubar J_n^{(i)} (\theta)$
and $\bar J_n^{(i)} (\theta)$, uniformly in $i \in \{1,\ldots,n\}$,
 since resampling $Z_i$ does not change the distribution of the trajectory.
\end{proof}

\section{Approximation lemma}

The following result is a key component to our proof. Recall that $D_{n,i} = \Exp [ L_n - L_n^{(i)} \mid \FF_i ]$.

\begin{lemma}
\label{estimate}
Suppose that $\Exp \| Z_1 \| < \infty$, $\gamma \in (0,1/2)$, and  $\delta \in (0, \pi/2)$.
For any $i \in I_{n,\gamma}$,
\begin{align}
\label{eq2}
  \left| D_{n,i} - \frac{2 ( Z_i - \Exp [ Z_1] ) \cdot \Exp [ Z_1]}{\| \Exp [ Z_1] \|}
\right|
& \leq 
6 \delta \| Z_i \| + 6 \delta \Exp  \| Z_1\| + 3 \pi \| Z_i\| \Pr [ E_{n,i}^\rc(\delta, \gamma) \mid \FF_i ] \nonumber \\
&{} \quad {} + 3 \pi \Exp [ \| Z_i'\| \1 (  E_{n,i}^\rc(\delta, \gamma) ) \mid \FF_i  ]
, \as
\end{align}
\end{lemma}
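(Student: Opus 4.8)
The plan is to start from the Cauchy-formula identity (\ref{cauchy}), take conditional expectations given $\FF_i$, and exploit the fact that for $i$ in the bulk the resampling moves essentially only the extreme vertex attaining the maximum in direction $\be_\theta$ (Lemma~\ref{En}(i)), so that $\Delta_n^{(i)}(\theta)$ is, up to a small error, the \emph{linear} functional $(Z_i - Z_i')\cdot\be_\theta$. Since $|\Delta_n^{(i)}(\theta)|\le 2\|Z_i\|+2\|Z_i'\|$ by (\ref{deltabound}) and $\Exp\|Z_1\|<\infty$, Fubini's theorem justifies $D_{n,i}=\int_0^\pi \Exp[\Delta_n^{(i)}(\theta)\mid\FF_i]\,\ud\theta$. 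I would then split $[0,\pi]$ into $[0,\delta]$, $[\delta,\pi-\delta]$ and $[\pi-\delta,\pi]$ and estimate the three pieces separately.

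On the two outer intervals, each of length $\delta$, I use only the crude bound (\ref{deltabound}): since $Z_i'$ is independent of $\FF_i$ with $\Exp\|Z_i'\|=\Exp\|Z_1\|$, we get $|\Exp[\Delta_n^{(i)}(\theta)\mid\FF_i]|\le 2\|Z_i\|+2\Exp\|Z_1\|$, so these two pieces together contribute at most $4\delta\|Z_i\|+4\delta\Exp\|Z_1\|$ to the error in (\ref{eq2}).

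On the middle interval, for $i\in I_{n,\gamma}$, Lemma~\ref{En}(i) gives $\Delta_n^{(i)}(\theta)\1(E_{n,i}(\delta,\gamma))=(Z_i-Z_i')\cdot\be_\theta\,\1(E_{n,i}(\delta,\gamma))$, which I would rewrite as
\[ \Delta_n^{(i)}(\theta) = (Z_i-Z_i')\cdot\be_\theta + \bigl(\Delta_n^{(i)}(\theta) - (Z_i-Z_i')\cdot\be_\theta\bigr)\1\bigl(E_{n,i}^{\rc}(\delta,\gamma)\bigr), \]
where, by (\ref{deltabound}) and the triangle inequality, the bracketed factor has modulus at most $3\|Z_i\|+3\|Z_i'\|$. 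Taking $\Exp[\,\cdot\mid\FF_i]$ pointwise in $\theta$, the linear term yields $(Z_i-\Exp[Z_1])\cdot\be_\theta$ (as $Z_i$ is $\FF_i$-measurable and $\Exp[Z_i'\mid\FF_i]=\Exp[Z_1]$), while the remainder is bounded by $3\|Z_i\|\Pr[E_{n,i}^{\rc}(\delta,\gamma)\mid\FF_i]+3\Exp[\|Z_i'\|\1(E_{n,i}^{\rc}(\delta,\gamma))\mid\FF_i]$; integrating over an interval of length at most $\pi$ produces exactly the last two terms of (\ref{eq2}). For the linear term, a direct computation gives $\int_\delta^{\pi-\delta}\be_\theta\,\ud\theta=(0,2\cos\delta)=2\cos\delta\,\be_{\pi/2}$, the first coordinate vanishing by symmetry of $\cos$ about $\pi/2$, so, using the normalization $\Exp[Z_1]=\mu\be_{\pi/2}$ with $\mu=\|\Exp[Z_1]\|$,
\[ \int_\delta^{\pi-\delta}(Z_i-\Exp[Z_1])\cdot\be_\theta\,\ud\theta = \frac{2\cos\delta}{\|\Exp[Z_1]\|}(Z_i-\Exp[Z_1])\cdot\Exp[Z_1]. \]
Replacing $\cos\delta$ by $1$ costs at most $2(1-\cos\delta)\|Z_i-\Exp[Z_1]\|\le 2\delta\|Z_i\|+2\delta\Exp\|Z_1\|$, using $1-\cos\delta\le\delta$, $|(Z_i-\Exp[Z_1])\cdot\Exp[Z_1]|\le\|Z_i-\Exp[Z_1]\|\,\|\Exp[Z_1]\|$, and $\|\Exp[Z_1]\|\le\Exp\|Z_1\|$. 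Adding the $4\delta\|Z_i\|+4\delta\Exp\|Z_1\|$ from the outer intervals gives the $6\delta\|Z_i\|+6\delta\Exp\|Z_1\|$ in (\ref{eq2}), completing the estimate.

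I expect no serious obstacle: the argument is essentially careful bookkeeping. The two points needing attention are the Fubini justification for moving $\Exp[\,\cdot\mid\FF_i]$ inside the $\theta$-integral (for which (\ref{deltabound}) and $\Exp\|Z_1\|<\infty$ suffice), and the evaluation $\int_\delta^{\pi-\delta}\be_\theta\,\ud\theta=2\cos\delta\,\be_{\pi/2}$. It is precisely the vanishing of the first-coordinate integral that makes the drift direction single out the linear functional $2(Z_i-\Exp[Z_1])\cdot\Exp[Z_1]/\|\Exp[Z_1]\|$ appearing in Theorem~\ref{thm0}, and the presence of $2\cos\delta$ rather than $2$ is exactly what forces the $O(\delta)$ error terms, which will be absorbed later by sending $\delta\downarrow 0$ after $n\to\infty$.
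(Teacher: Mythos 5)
Your proof is correct and follows essentially the same route as the paper: same Cauchy-formula starting point, same three-interval split of $[0,\pi]$, same application of Lemma~\ref{En}(i) on the bulk interval, and the same crude bound (\ref{deltabound}) elsewhere, landing on the identical constants $6\delta$ and $3\pi$. The only cosmetic difference is that you organize the bookkeeping by interval first and evaluate $\int_\delta^{\pi-\delta}\be_\theta\,\ud\theta = 2\cos\delta\,\be_{\pi/2}$ directly (paying $O(\delta)$ to replace $\cos\delta$ by $1$), whereas the paper splits by event first and extends the bulk integral to $[0,\pi]$ before evaluating; these are equivalent estimates.
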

\begin{proof}
Taking (conditional) expectations
in (\ref{cauchy}), we obtain
\begin{equation}
 \label{eq4}
 D_{n,i}
 = \int_0^\pi \Exp [ \Delta_n^{(i)} (\theta) \1 (E_{n,i} (\delta, \gamma)) \mid \FF_i ] \ud \theta + \int_0^\pi \Exp [ \Delta_n^{(i)} (\theta) \1 (E_{n,i} ^\rc (\delta,\gamma)) \mid \FF_i ] \ud \theta .\end{equation}
For the second term on the right-hand side of (\ref{eq4}), we have
\begin{align}
\label{eq5}
 \left| \int_0^\pi \Exp [ \Delta_n^{(i)} (\theta) \1 (E_{n,i} ^\rc (\delta,\gamma)) \mid \FF_i ] \ud \theta \right| &
\leq \int_0^\pi \Exp [ | \Delta_n^{(i)} (\theta) | \1 (E_{n,i}^\rc (\delta,\gamma) ) \mid \FF_i ] \ud \theta . \end{align}
 Applying the bound (\ref{deltabound}), we obtain
\begin{align}
\label{eq6}
\int_0^\pi \Exp [ | \Delta_n^{(i)} (\theta) | \1 (E_{n,i}^\rc (\delta,\gamma) ) \mid \FF_i ] \ud \theta
  \leq
2\pi \Exp [ ( \| Z_i\| + \| Z_i'\|  )  \1 (E_{n,i}^\rc (\delta,\gamma) ) \mid \FF_i ] \nonumber\\
  = 2\pi   \| Z_i \| \Pr [ E_{n,i}^\rc (\delta,\gamma)  \mid \FF_i ]
   + 2 \pi \Exp [ \| Z_i' \| \1 ( E_{n,i}^\rc (\delta,\gamma) ) \mid \FF_i ] ,\end{align}
since $Z_i$ is $\FF_i$-measurable with $\Exp   \| Z_i \|   < \infty$.

We decompose the   first integral on the right-hand side of (\ref{eq4}) as $I_1 + I_2 + I_3$,
where
\begin{align*} I_1 & := \int_0^{\delta}  \Exp [ \Delta_n^{(i)} (\theta) \1 (E_{n,i} (\delta,\gamma)) \mid \FF_i ] \ud \theta , \\
I_2 & := \int_{\delta}^{\pi-\delta}  \Exp [ \Delta_n^{(i)} (\theta) \1 (E_{n,i} (\delta,\gamma)) \mid \FF_i ] \ud \theta , \\
I_3 & := \int_{\pi-\delta}^{\pi}  \Exp [ \Delta_n^{(i)} (\theta) \1 (E_{n,i} (\delta,\gamma)) \mid \FF_i ] \ud \theta .\end{align*}

First we deal with $I_1$ and $I_3$.
We have
\begin{align*} | I_1 |    \leq \int_0^{\delta} \Exp  [ | \Delta^{(i)}_{n} (\theta) | \mid \FF_i ] \ud \theta   \leq 
2 \delta \Exp [  \| Z_i \| + \| Z_i'\|  \mid \FF_i ] ,\as,   \end{align*}
by another application of (\ref{deltabound}).
Here $\Exp [ \| Z_i \| \mid \FF_i ] = \| Z_i \|$, since $Z_i$ is $\FF_i$-measurable,
and, since $Z_i'$ is independent of $\FF_i$,
 $\Exp [ \| Z_i'\| \mid \FF_i ] = \Exp \| Z_i'\| = \Exp \| Z_1 \|$.
A similar argument applies to $I_3$, so that
\begin{equation}
\label{eq7}
|I_1+I_3| \leq 4 \delta \| Z_i \| + 4 \delta \Exp \| Z_1 \| , \as \end{equation}

We now consider $I_2$.
From (\ref{change}), since $i \in I_{n,\gamma}$,
 we have
\begin{align*} I_2 & = \int_{\delta}^{\pi - \delta} \Exp [ (Z_i - Z'_i) \cdot \be_\theta \1 (E_{n,i} (\delta,\gamma)) \mid \FF_i ] \ud \theta
  \\
& = \int_{\delta}^{\pi - \delta} \Exp [ (Z_i - Z'_i) \cdot \be_\theta \mid \FF_i ] \ud \theta
- \int_{\delta}^{\pi - \delta} \Exp [ (Z_i - Z'_i) \cdot \be_\theta \1 (E^{\rm c}_{n,i} (\delta,\gamma)) \mid \FF_i ] \ud \theta
.\end{align*}
 Here, by the triangle inequality,
\begin{align}
\left| \int_{\delta}^{\pi - \delta} \Exp [ (Z_i - Z'_i) \cdot \be_\theta \1 (E^{\rm c}_{n,i} (\delta,\gamma)) \mid \FF_i ] \ud \theta \right|
 & \leq \int_{0}^{\pi} \Exp [ ( \| Z_i\| + \| Z'_i\|) \1 (E^{\rm c}_{n,i} (\delta,\gamma)) \mid \FF_i ] \ud \theta \nonumber\\
\label{eq8}
 & {} \hskip-3cm =
\pi   \| Z_i \| \Pr [ E_{n,i}^\rc (\delta,\gamma)  \mid \FF_i ]
   +  \pi \Exp [ \| Z_i' \| \1 ( E_{n,i}^\rc (\delta,\gamma) ) \mid \FF_i ]
,  \end{align}
similarly to (\ref{eq6}).
Finally, similarly to (\ref{eq7}),
\begin{align}  &  \left| \int_{\delta}^{\pi - \delta} \Exp [ (Z_i - Z'_i) \cdot \be_\theta  \mid \FF_i ] \ud \theta
- \int_{0}^{\pi} \Exp [ (Z_i - Z'_i) \cdot \be_\theta  \mid \FF_i ] \ud \theta  \right|
  \leq 2 \delta \Exp [ \| Z_i\|  + \| Z'_i \| \mid \FF_i ] \nonumber\\
\label{eq15}
  & \qquad \qquad \qquad \qquad \qquad \qquad \qquad \qquad \qquad \qquad \qquad \qquad
   {} = 2 \delta \left( \| Z_i \| + \Exp \| Z_1 \| \right) .     \end{align}

We combine (\ref{eq4}) with (\ref{eq5}) and the bounds in (\ref{eq6}), (\ref{eq7}), (\ref{eq8}) and (\ref{eq15})
to give
\begin{align}
\label{eq29}
  \left| D_{n,i} - \int_{0}^{\pi} \Exp [ (Z_i - Z'_i) \cdot \be_\theta  \mid \FF_i ] \ud \theta \right|
& \leq 
6 \delta \| Z_i \| + 6 \delta \Exp  \| Z_1\| + 3 \pi \| Z_i\| \Pr [ E_{n,i}^\rc(\delta, \gamma) \mid \FF_i ] \nonumber \\
&{} \quad {} + 3 \pi \Exp [ \| Z_i'\| \1 (  E_{n,i}^\rc(\delta, \gamma) ) \mid \FF_i  ]
, \as
\end{align}
To complete the proof of the lemma, we compute the integral on the left-hand side
of (\ref{eq29}). 
First note that
$\Exp [ (Z_i - Z'_i) \cdot \be_\theta  \mid \FF_i ] = (  Z_i - \Exp [ Z'_i]  ) \cdot \be_\theta$, since
$Z_i$ is $\FF_i$-measurable and $Z_i'$ is independent of $\FF_i$,
so that
\[ \int_{0}^{\pi} \Exp [ (Z_i - Z'_i) \cdot \be_\theta  \mid \FF_i ] \ud \theta = \int_{0}^{\pi} (  Z_i -\Exp [ Z_i] ) \cdot \be_\theta \ud \theta.\]
To evaluate the last integral, it is convenient to introduce the notation $Z_i - \Exp [ Z_i] = R_i \be_{\Theta_i}$
where $R_i = \| Z_i - \Exp [ Z_i ] \| \geq 0$ and $\Theta_i \in [0, 2\pi )$.
Then
\begin{align*} \int_{0}^{\pi} (  Z_i -\Exp [ Z_i] ) \cdot \be_\theta \ud \theta &
= \int_0^\pi   R_i \be_{\Theta_i} \cdot \be_\theta   \ud \theta
= R_i \int_0^\pi \cos (\theta - \Theta_i) \ud \theta  \\
& =  2 R_i \sin \Theta_i = 2 R_i \be_{\Theta_i} \cdot \be_{\pi/2}.\end{align*}
Now (\ref{eq2}) follows from (\ref{eq29}), and the proof is complete.
\end{proof}

\section{Completing the proofs of the theorems}

For ease of notation, we write $Y_i := 2 \| \Exp [ Z_1] \|^{-1} ( Z_i - \Exp [ Z_1] ) \cdot \Exp [ Z_1]$, and
define
\[ W_{n,i} :=   D_{n,i} - Y_i .\]
The upper bound for $|W_{n,i}|$ in
Lemma \ref{estimate} together with Lemma \ref{En}(ii)  will enable us to prove the following result, which will be the basis of our proof of Theorem \ref{thm0}.

\begin{lemma}
\label{wni}
Suppose that $\Exp [ \| Z_1\|^2] <\infty$ and $\| \Exp [Z_1]\| \neq 0$. Then
\[ \lim_{n \to \infty} n^{-1} \sum_{i=1}^n \Exp [ W_{n,i}^2 ] = 0 . \]
\end{lemma}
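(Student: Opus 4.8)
The plan is to split the sum $\sum_{i=1}^n \Exp[W_{n,i}^2]$ according to whether $i \in I_{n,\gamma}$ or not, and to control the two pieces separately, with the parameters $\delta$ and $\gamma$ chosen at the end. First I would dispose of the indices $i \notin I_{n,\gamma}$: there are at most $2\gamma n$ of these, and for each of them $\Exp[W_{n,i}^2] \le 2\Exp[D_{n,i}^2] + 2\Exp[Y_i^2]$, where $\Exp[Y_i^2] = \sigma^2$ is a fixed finite constant (using $\Exp[\|Z_1\|^2]<\infty$), and $\Exp[D_{n,i}^2] \le \Exp[(L_n - L_n^{(i)})^2] \le 8\Exp[(\|Z_i\|+\|Z_i'\|)^2] \le 32 \Exp[\|Z_1\|^2]$ by conditional Jensen and the bound~(\ref{deltabound}) integrated over $\theta \in [0,\pi]$ (actually one should use $|L_n - L_n^{(i)}| \le \pi \cdot 2(\|Z_i\|+\|Z_i'\|)$ from (\ref{cauchy}) and (\ref{deltabound}), giving a bound $C\Exp[\|Z_1\|^2]$). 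Hence $n^{-1}\sum_{i \notin I_{n,\gamma}} \Exp[W_{n,i}^2] \le C' \gamma$ for a constant $C'$ depending only on the law of $Z_1$, uniformly in $n$.

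Next, for $i \in I_{n,\gamma}$, I would use Lemma~\ref{estimate}, which bounds $|W_{n,i}|$ by a sum of four terms. Squaring and using $(a+b+c+d)^2 \le 4(a^2+b^2+c^2+d^2)$, taking expectations, and using that $Z_i, Z_i'$ have the law of $Z_1$ with finite second moment, the two ``$\delta$'' terms contribute $O(\delta^2 \Exp[\|Z_1\|^2])$ per index, hence $O(\delta^2 n)$ after summing, hence $O(\delta^2)$ after dividing by $n$. For the two ``tail'' terms I would bound, writing $p_{n,i} := \Pr[E_{n,i}^{\rc}(\delta,\gamma)\mid \FF_i]$,
\[
\Exp\big[ \|Z_i\|^2 p_{n,i}^2 \big] \le \Exp\big[ \|Z_i\|^2 p_{n,i} \big] \le \big(\Exp[\|Z_1\|^4]\big)^{1/2}\big(\Exp[p_{n,i}^2]\big)^{1/2}
\]
is a natural first thought, but it needs a fourth moment; instead I would use that $\|Z_i\|^2 \1(\|Z_i\| > K)$ has small expectation (uniform integrability of $\|Z_1\|^2$) to handle the large-$\|Z_i\|$ part, and on $\{\|Z_i\|\le K\}$ bound $\|Z_i\|^2 p_{n,i}^2 \le K^2 p_{n,i}$, whose expectation is $K^2 \Pr[E_{n,i}^{\rc}(\delta,\gamma)]$. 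By the tower property and Cauchy--Schwarz the term $\Exp[(\Exp[\|Z_i'\|\1(E_{n,i}^{\rc})\mid\FF_i])^2] \le \Exp[\|Z_i'\|^2 \1(E_{n,i}^{\rc})] = \Exp[\|Z_1\|^2]\cdot(\text{small part}) + K^2 \Pr[E_{n,i}^{\rc}]$ is handled the same way. Lemma~\ref{En}(ii) gives $\max_{1\le i\le n}\Pr[E_{n,i}^{\rc}(\delta,\gamma)] \to 0$ as $n\to\infty$, so after dividing by $n$ these tail contributions are at most $C_K \max_i \Pr[E_{n,i}^{\rc}(\delta,\gamma)] + (\text{tail of }\Exp[\|Z_1\|^2\1(\|Z_1\|>K)])$, which tends to (a quantity controlled by $K$) as $n\to\infty$.

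Putting it together: for any fixed $\gamma, \delta, K$,
\[
\limsup_{n\to\infty} n^{-1}\sum_{i=1}^n \Exp[W_{n,i}^2] \le C'\gamma + C''\delta^2 + C'''\,\Exp\big[\|Z_1\|^2 \1(\|Z_1\|>K)\big],
\]
and then letting $\gamma \downarrow 0$, $\delta \downarrow 0$, $K \to \infty$ (in that order, noting the constants do not depend on $n$) forces the $\limsup$ to be $0$, which is the claim. The main obstacle is the bookkeeping around the tail terms involving $\|Z_i\|\Pr[E_{n,i}^{\rc}\mid\FF_i]$ and $\Exp[\|Z_i'\|\1(E_{n,i}^{\rc})\mid\FF_i]$: one only has a second moment on $\|Z_1\|$, so one cannot simply apply Cauchy--Schwarz against $\Pr[E_{n,i}^{\rc}]$; the truncation-at-$K$ device (splitting $\|Z_1\|^2 = \|Z_1\|^2\1(\|Z_1\|\le K) + \|Z_1\|^2\1(\|Z_1\|>K)$ and using uniform integrability for the second piece) is what makes the estimate go through, and getting the order of limits right is the one place requiring care.
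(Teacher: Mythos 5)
Your plan matches the paper's proof in all essentials: the same split into $i\in I_{n,\gamma}$ versus $i\notin I_{n,\gamma}$, the same uniform $O(1)$ bound on $\Exp[W_{n,i}^2]$ off $I_{n,\gamma}$, the same appeal to Lemma~\ref{estimate} plus a truncation at a level $K$ (the paper uses two levels $B_1$, $B_2$) to get around having only a second moment, and the same invocation of Lemma~\ref{En}(ii) to kill the $\Pr[E^{\rc}_{n,i}]$ contribution in the limit. The only differences are cosmetic bookkeeping — the paper truncates inside the linear bound on $|W_{n,i}|$ before squaring, whereas you square first and then truncate — and both routes work.
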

\begin{proof}
Fix $\eps >0$. We take $\gamma \in (0,1/2)$ and $\delta \in (0,\pi/2)$, to be specified later. We divide the sum of interest into two parts, namely $i \in I_{n, \gamma}$ and $i \notin I_{n,\gamma}$.
 Now from (\ref{cauchy}) with (\ref{deltabound}) we have
$| L_n^{(i)} - L_n | \leq 2 \pi ( \| Z_i \| + \| Z_i' \| )$, a.s.,
so that
\[ | D_{n,i} | \leq 2 \pi \Exp [ \| Z_i \| + \| Z_i' \| \mid \FF_i ]
= 2 \pi ( \| Z_i \| + \Exp \| Z_i \| ) .\]
It then follows from  the triangle inequality that
\[ | W_{n,i} | \leq | D_{n,i} | + 2 \| Z_i - \Exp [ Z_i ] \|
\leq (2 \pi + 2) ( \| Z_i \| + \Exp \| Z_i \| ) .\]
So provided $\Exp [ \| Z_1 \|^2 ] < \infty$, we have
$\Exp [ W_{n,i}^2 ] \leq C_0$ for all $n$ and all $i$, for some
constant $C_0 < \infty$, depending only on the distribution of $Z_1$.
Hence
\[ \frac{1}{n} \sum_{i \notin I_{n,\gamma} } \Exp [ W_{n,i}^2 ]
\leq \frac{1}{n} 2 \gamma n C_0 = 2 \gamma C_0 ,\]
using the fact that there are at most $2\gamma n$ terms in the sum.
From now on, choose $\gamma >0$ small enough so that $2 \gamma C_0 < \eps$.

Now consider $i \in I_{n, \gamma}$.
For such $i$, (\ref{eq2}) shows that, for some constant $C_1 < \infty$,
\begin{equation}
\label{eq30}
 | W_{n,i} | \leq C_1 (1 + \| Z_i \| )   \delta + C_1 \| Z_i \| \Pr [ E_{n,i}^\rc (\delta,\gamma) \mid \FF_i ] ) + C_1 \Exp [ \| Z_i'\| \1 (E_{n,i}^\rc (\delta,\gamma)) \mid \FF_i ]  , \as \end{equation}
Here, for any $B_1 \in (0,\infty)$, a.s.,
\begin{align*}
 \Exp [ \| Z_i'\| \1 (E_{n,i}^\rc (\delta,\gamma)) \mid \FF_i ]
& \leq \Exp [ \| Z_i'\| \1 \{ \| Z_i' \| > B_1 \} \mid \FF_i ]
+ B_1 \Pr [ E_{n,i}^\rc (\delta,\gamma) \mid \FF_i ] \\
& = \Exp [ \| Z_i'\| \1 \{ \| Z_i' \| > B_1 \} ] + B_1 \Pr [ E_{n,i}^\rc (\delta,\gamma) \mid \FF_i ] ,
\end{align*}
since $Z_i'$ is independent of $\FF_i$.
Here, since $\Exp \| Z_i'\| = \Exp \|Z_1 \| < \infty$,
the dominated convergence
theorem implies that $\Exp [ \| Z_i'\| \1 \{ \| Z_i' \| > B_1 \} ]  \to 0$
as $B_1 \to \infty$.
So we can choose $B_1 = B_1 (\delta)$ large enough so that 
\[  \Exp [ \| Z_i'\| \1 (E_{n,i}^\rc (\delta,\gamma)) \mid \FF_i ]
\leq \delta +  B_1 \Pr [ E_{n,i}^\rc (\delta,\gamma) \mid \FF_i ], \as \]
Combining this with (\ref{eq30}) we see that there is  a constant $C_2 < \infty$ for which
\[ | W_{n,i} | \leq C_2 (1 + \| Z_i \| )  \left(
 \delta + B_1 \Pr [ E_{n,i}^\rc (\delta,\gamma) \mid \FF_i ] \right)  , \as 
\]
Hence
\begin{align*}
W_{n,i}^2 & \leq  C_2^2 (1 + \| Z_i \| )^2 \left( \delta^2 + 2B_1 \delta \Pr [ E_{n,i}^\rc (\delta,\gamma) \mid \FF_i ] +
B_1^2 \Pr [ E_{n,i}^\rc (\delta,\gamma) \mid \FF_i ]^2 \right) \\
& \leq  C_3^2 (1 + \| Z_i \| )^2 \left(  \delta + B_1^2 \Pr [ E_{n,i}^\rc (\delta,\gamma) \mid \FF_i ] \right) ,\end{align*}
for some constant $C_3 < \infty$,
using the facts that $\delta < \pi/2 < 2$ and $\Pr [ E_{n,i}^\rc (\delta,\gamma) \mid \FF_i ] \leq 1$.
Taking expectations we get
\[ \Exp [ W_{n,i}^2 ] \leq C_3^2 \delta \Exp [  (1 + \| Z_i \| )^2 ] + C_3^2 B_1^2 \Exp
\left[ (1 + \| Z_i \| )^2 \Pr [ E_{n,i}^\rc (\delta,\gamma) \mid \FF_i ] \right] .\]
Provided $\Exp [ \|Z_1 \|^2 ] < \infty$, 
there is a constant $C_4 < \infty$ such that the first term
on the right-hand side of the last display is bounded
by $C_4 \delta$. Now fix $\delta >0$ small
enough so that $C_4 \delta < \eps$; this choice also
fixes $B_1$. Then 
\begin{equation}
\label{eq32}
 \Exp [ W_{n,i}^2 ] \leq \eps  + C_3^2 B_1^2 \Exp
\left[ (1 + \| Z_i \| )^2 \Pr [ E_{n,i}^\rc (\delta,\gamma) \mid \FF_i ] \right] .\end{equation}
For the final term in (\ref{eq32}), observe that, for any $B_2 \in (0,\infty)$, a.s.,
\begin{align}
\label{eq31}
(1 + \| Z_i \|)^2 \Pr [ E_{n,i}^\rc (\delta,\gamma) \mid \FF_i ]
\leq (1+B_2)^2 \Pr [ E_{n,i}^\rc (\delta ,\gamma) \mid \FF_i ]
+ (1 + \| Z_i \|)^2 \1 \{ \| Z_i \| > B_2 \} .\end{align}
Here
$\Exp [ (1 + \| Z_i \|)^2 \1 \{ \| Z_i \| > B_2 \} ] \to 0$
as $B_2 \to \infty$, provided $\Exp [ \| Z_1 \|^2] <\infty$,
by the dominated convergence theorem.
Hence, since $\delta$ and $B_1$ are fixed,
we can choose $B_2 = B_2(\eps) \in (0,\infty)$ such that
$C_3^2 B_1^2 \Exp [ (1 + \| Z_i \|)^2 \1 \{ \| Z_i \| > B_2 \} ] < \eps$.
Then taking expectations in (\ref{eq31}) we obtain from (\ref{eq32}) that
\[ \Exp [ W_{n,i}^2 ] \leq   2 \eps + C_3^2 B_1^2  (1+B_2)^2 \Pr [ E_{n,i}^{\rm c } (\delta,\gamma) ]
  .\]

Now choose $n_0$ such that
$C_3^2 B_1^2  (1+B_2)^2 \Pr [ E_{n,i}^{\rm c } (\delta,\gamma) ] < \eps$ for all $n \geq n_0$,
which we may do by Lemma \ref{En}(ii). So for the given $\eps>0$ and  $\gamma \in (0,1/2)$,
we can choose
$n_0$ such that for all $i \in I_{n,\gamma}$ and all $n \geq n_0$,
$ \Exp [ W_{n,i}^2 ] \leq 3 \eps$. Hence
\[ \frac{1}{n} \sum_{i \in I_{n,\gamma}}  \Exp [ W_{n,i}^2 ]  \leq 3 \eps ,\]
for all $n \geq n_0$.

Combining the estimates for $i \in I_{n,\gamma}$ and $i \notin I_{n,\gamma}$, we see that
\[ \frac{1}{n} \sum_{i=1}^n  \Exp [ W_{n,i}^2 ]
\leq 2 \gamma C_0 + 3 \eps \leq 4 \eps ,\]
for all $n \geq n_0$. Since $\eps>0$ was arbitrary, the result follows.
\end{proof}

Now we can complete the proofs of our main theorems.

\begin{proof}[Proof of Theorem \ref{thm0}.]
First note that
\[ \Exp [ W_{n,i} \mid \FF_{i-1} ] = \Exp [ D_{n,i} \mid \FF_{i-1} ] - \Exp [ Y_i \mid \FF_{i-1} ]
 = 0 - \Exp [ Y_i],
\]
since $D_{n,i}$ is a martingale difference sequence and $Y_i$ is independent of $\FF_{i-1}$.
Here, by definition, $\Exp [ Y_i] =0$, and so $W_{n,i}$ is also a martingale difference
sequence. Therefore, by orthogonality,
$n^{-1}\Exp[ ( \sum_{i=1}^{n}W_{n,i})^2 ] =
n^{-1} \sum_{i=1}^{n} \Exp[ W_{n,i}^2 ]  \to 0$ as $n \to \infty$, by
Lemma \ref{wni}. In other words, $n^{-1/2} \sum_{i=1}^{n}W_{n,i} \to 0$ in $L^2$,
which implies the statement in the theorem.
\end{proof}

\begin{proof}[Proof of Theorem \ref{thm1}.]
Write
\begin{equation}
 \label{xizeta}
\xi_n = \frac{L_n - \Exp[L_n]}{\sqrt {  n} }; ~~\textrm{and} ~~
\zeta_n = \frac {1}{\sqrt{  n }} \sum_{i=1}^n Y_i,
~\textrm{where} ~ Y_i = \frac{2 (Z_i - \Exp [ Z_1 ] ) \cdot \Exp [ Z_1]}{\| \Exp [ Z_1 ] \|}.
\end{equation}
Then Theorem \ref{thm0} shows that $| \xi_n - \zeta_n | \to 0$ in $L^2$ as $n \to \infty$. Also,
with $\sigma^2$ as given by (\ref{vlim}), $\Exp [ \zeta_n^2 ] = \sigma^2$. Then a computation shows that
\[    n^{-1} \Var [ L_n ] = \Exp [ \xi_n^2 ] = \Exp [ (\xi_n - \zeta_n)^2] + \Exp [ \zeta_n^2 ] +2 \Exp [ (\xi_n -\zeta_n) \zeta_n ] .\]
Here, by the $L^2$ convergence, $\Exp [ (\xi_n - \zeta_n)^2] \to 0$ and, by the Cauchy--Schwarz inequality,
$| \Exp [ (\xi_n -\zeta_n) \zeta_n ] | \leq \left( \Exp [ (\xi_n - \zeta_n)^2] \Exp [ \zeta_n^2] \right)^{1/2} \to 0$
as well. So $\Exp [ \xi_n^2] \to \sigma^2$ as $n \to \infty$.
\end{proof}

In the proof of Theorem \ref{thm2} we will use two facts about convergence in distribution that we now
recall (see e.g.\ \cite[p.\ 73]{durrett}).
First, if sequences of random variables
$\xi_n$ and $\zeta_n$ are such that $\zeta_n \to \zeta$ in distribution for some random variable $\zeta$ and $|\xi_n -\zeta_n| \to 0$ in probability,
then $\xi_n \to \zeta$ in distribution (this is \emph{Slutsky's theorem}). Second,
if $\zeta_n \to \zeta$ in distribution and $\alpha_n \to \alpha$ in probability, then $\alpha_n \zeta_n \to \alpha \zeta$ in distribution.

\begin{proof}[Proof of Theorem \ref{thm2}.]
Suppose $\sigma^2$ as given by (\ref{vlim}) satisfies $\sigma^2 >0$.
Again use the notation for $\xi_n$ and $\zeta_n$ as given by (\ref{xizeta}).
Then, by Theorem \ref{thm0}, $| \xi_n - \zeta_n | \to 0$ in $L^2$,
and hence
in probability.

In the sum $\zeta_n$, the $Y_i$ are i.i.d.\ random variables
with mean $0$ and variance $\Exp [Y_i^2 ] = \sigma^2$. Hence the classical
central limit theorem (see e.g.\ \cite[p.\ 93]{durrett})
shows that $\zeta_n$ converges in distribution to a   normal
random variable with mean $0$ and variance $\sigma^2$.
  Slutsky's theorem then implies that $\xi_n$ has the same distributional limit. Hence, for any $x \in \R$,
\[ \lim_{n\to \infty} \Pr \left[ \frac{ \xi_n}{\sqrt{ \sigma^2 }}  \leq x \right] = \lim_{n\to \infty}  \Pr \left[ \frac{L_n - \Exp [ L_n]}{\sqrt{ \sigma^2 n}} \leq x \right] = \Phi (x) ,\]
where $\Phi$ is the standard normal distribution function.
Moreover,
\[ \Pr \left[ \frac{L_n - \Exp [ L_n]}{\sqrt{ \Var [ L_n ]}} \leq x \right]
 = \Pr \left[ \frac{\xi_n \alpha_n}{\sqrt { \sigma^2 }} \leq x \right] ,
\]
where $\alpha _n = \sqrt {\frac{\sigma^2 n}{\Var [L_n] }} \to 1$
by Theorem \ref{thm1}. Thus we verify the limit statements in (\ref{clt}).
\end{proof}

 \section*{Acknowledgements}

 Some of this work was done when the first author was at the University of Strathclyde.
 The second author is supported by a University of Strathclyde Ph.D.\ studentship, funded in part
 by the EPSRC.

\end{document}